\newcommand{\rightperp}[1]{#1^{\perp}}
\newcommand{\mathcolon}{\colon\,}
\newcommand{\leftperp}[1]{{}^\perp #1}
\DeclareSymbolFont{usualmathcal}{OMS}{cmsy}{m}{n}
\DeclareSymbolFontAlphabet{\mathcal}{usualmathcal}
\newtheorem{theorem}{Theorem}[section]
\newtheorem{lemma}[theorem]{Lemma}
\newtheorem{corollary}[theorem]{Corollary}
\theoremstyle{definition}
\newtheorem{definition}[theorem]{Definition}
\newtheorem{remark}[theorem]{Remark}
\newtheorem{ipg}[theorem]{}
\def\Qcoh{\mathfrak{Qcoh}}
\def\VF{\mathcal{V}\!\mathcal{F}}
\newcommand{\Ftot}[1]{\mathbf{F}_{{\rm tac}}(#1)}
\newcommand{\Ftt}[1]{\mathbf{F}_{{\rm tac}}^{#1}}
\def\RFML{\textrm{rFlatML}}
\def\FML{\textrm{FlatML}}
\def\CA{\,\mathcal{C}\!\!\mathcal{A}}
\newcommand{\shcplx}[1]{\mathscr{#1}_\bullet}
\newcommand{\pd}{\mathrm{pd}}
\newcommand{\bbN}{\mathbb{N}}
\newcommand{\bbZ}{\mathbb{Z}}
\newcommand{\Spec}{\operatorname{Spec}} 
\newcommand{\OO}{\mathcal{O}}              
\newcommand{\Hom}{\operatorname{Hom}}
\newcommand{\Ext}{\operatorname{Ext}}
\newcommand{\Ker}{\operatorname{Ker}}
\newcommand{\A}{\mathcal{A}}
\newcommand{\B}{\mathcal{B}}
\newcommand{\C}{\mathcal{C}}
\newcommand{\F}{\mathcal{F}}
\newcommand{\G}{\mathcal{G}}
\newcommand{\clH}{\mathcal{H}}
\newcommand{\M}{\mathcal{M}}
\newcommand{\U}{\mathcal{U}}
\newcommand{\W}{\mathcal{W}}
 \newcommand{\Ch}{\mathrm{Ch}}
\newcommand{\tilclass}[1]{\widetilde{#1}}
\newcommand{\Modl}[1]{{#1}\textrm{-}\mathrm{Mod}}
\newcommand{\Flat}{\mathrm{Flat}}
\newcommand{\Cot}{\mathrm{Cot}}
\newcommand{\Proj}{\mathrm{Proj}}
\newcommand{\Vect}{\mathrm{Vect}}
\newcommand{\Filt}[1]{\operatorname{Filt}{#1}}
\newcommand{\Ho}{\operatorname{Ho}}
\theoremstyle{plain}
\newtheorem{thm}{Theorem}[section]
\newtheorem{lem}[thm]{Lemma}
\newtheorem{prop}[thm]{Proposition}
\newtheorem{cor}[thm]{Corollary}
\theoremstyle{definition}
\newtheorem{defn}[thm]{Definition}
\theoremstyle{remark}
\begin{document}
\title[Quillen equivalent models for the derived category of flats]{Quillen equivalent models for the derived category of flats and the resolution property }

\author{Sergio Estrada}
\address{(S.E.) Departamento de Matem\'aticas, Universidad de Murcia, 30100 Murcia, Spain}
\email{sestrada@um.es}

\author{Alexander Sl\'avik}
\address{(A.S.) Charles University, Faculty of Mathematics and Physics, Department of Algebra, Sokolovsk\'a 83, 186 75 Prague 8, Czech Republic}
\email{Slavik.Alexander@seznam.cz}

\thanks{The first author is supported by the grant MTM2016-77445-P and FEDER funds and the grant 19880/GERM/15 by the Fundaci\'on S\'eneca-Agencia de Ciencia y Tecnolog\'{\i}a de la Regi\'on de Murcia}
\date{}

\subjclass[2010]{14F05, 18E10, 18E30, 18G55}
\keywords{Resolution property, very flat sheaf, model category, Quillen equivalence, homotopy category, restricted flat Mittag-Leffler}


\begin{abstract}
We investigate under which assumptions a subclass of flat quasi-coherent shea\-ves on a quasi-compact and semi-separated scheme allows to ``mock'' the homotopy category of projective modules. Our methods are based on module theoretic properties of the subclass of flat modules involved as well as their behaviour with respect to Zariski localizations. As a consequence we get that, for such schemes, the derived category of flats is equivalent to the derived category of very flats. If, in addition, the scheme satisfies the resolution property then both derived categories are equivalent to the derived category of infinite-dimensional vector bundles. The equivalences are inferred from a Quillen equivalence between the corresponding models.
\end{abstract}

\maketitle

\section{Introduction}

Throughout the paper $R$ will denote a commutative ring. 
In \cite{Nee} Neeman gives a new description of the homotopy category $\mathbf{K}(\Proj(R))$ as a quotient of $\mathbf{K}(\Flat(R))$. The main advantage of the new description is that it does not involve projective objects, so it can be generalized to non-affine schemes (see \cite[Remark 3.4]{Nee2}). So, in his thesis \cite{Mur}, Murfet \emph{mocks} the homotopy category of projectives on a non-affine scheme, by considering the category ${\bf D}(\Flat(X))$\footnote{\,The original terminology in \cite{Mur} for ${\bf D}(\Flat(X))$ was $\mathbf{K}_m(\Proj (X))$. This is referred in \cite{MS} as the \emph{pure derived category of flat sheaves} on $X$ and denoted by ${\bf D}(\Flat(X))$.} defined as the Verdier quotient $${\bf D}(\Flat(X)):=\frac{{\bf K}(\Flat(X))}{\widetilde{\Flat\mkern 0mu}^{\scalebox{0.6}{\bf K}}\!\!(X)},$$ where $ \widetilde{\Flat\mkern 0mu}^{\scalebox{0.6}{\bf K}}(X)$ denotes the class of acyclic complexes in ${\bf K}(\Flat(X))$ with flat cycles. In the language of model categories, Gillespie showed in \cite{G} that ${\bf D}(\Flat(X))$ can be realized as the homotopy category of a Quillen model structure on the category $\Ch(\Qcoh(X))$ of unbounded chain complexes of quasi-coherent sheaves on a quasi com\-pact and semi-separated scheme, and that, in fact, in case $X=\Spec(R)$ is affine, both homotopy categories ${\bf D}(\Flat(X))$ and $\mathbf{K}(\Proj(R))$ are triangle equivalent, coming from a Quillen equivalence between the corresponding models.

However, from an homological point of view, flat modules are much more complicated than projective modules. For instance, for a general commutative ring, the exact category of flat modules has infinite homological dimension. In order to partially remedy these complications, recently Positselski in \cite{P} has introduced a refinement of the class of flat quasi-coherent sheaves, the so-called \emph{very flat} quasi-coherent sheaves (see Section \ref{section.veryflat} for the definition and main properties) and showed that this class shares many nice properties with the class of flat sheaves, but it has potentially several advantages with respect to it, for instance, it can be applied to matrix factorizations (see the introduction of the recent preprint \cite{PS} for a nice and detailed treatment on the goodness of the very flat sheaves).

Moreover, in the affine case $X=\Spec(R)$, the exact category of very flat modules has finite homological dimension (every very flat module has projective dimension $\leq 1$). Therefore one easily obtains in this case a triangulated equivalence between ${\bf D}(\VF(R))$ and $\mathbf{K}(\Proj(R))$ (here $\VF(R)$ denotes the class of very flat $R$-modules). In particular it is much less involved than the aforementioned triangulated equivalence between ${\bf D}(\Flat(R))$ and $\mathbf{K}(\Proj(R))$ (\cite[Theorem 1.2]{Nee}).

So, if we denote by $\VF(X)$ the class of very flat quasi-coherent sheaves, one can also think in ``mocking'' the homotopy category of projectives over a non-affine scheme by defining the Verdier quotient $${\bf D}(\VF(X)):=\frac{{\bf K}(\VF(X))}{\widetilde{\VF\mkern 0mu}^{\scalebox{0.6}{\bf K}}\!\!(X)}.$$ It is then natural to wonder whether or not the (indirect) triangulated equivalence between ${\bf D}(\Flat(R))$ and ${\bf D}(\VF(R))$ still holds over a non-affine scheme. This was already proved to be the case for a semi-separated Noetherian scheme of finite Krull dimension in \cite[Corollary 5.4.3]{P}. As a first consequence of the results in this paper, we extend in Corollary \ref{cor.triang.equiv.flatveryflat} this result for arbitrary (quasi-compact and semi-separated) schemes.

\medskip\par\noindent
{\bf Corollary 1.} For any scheme $X$, the categories  ${\bf D}(\Flat(X))$ and ${\bf D}(\VF(X))$ are triangle equivalent.

\medskip\par Recall from Totaro \cite{Totaro} (see Gross \cite{Gross} for the general notion) that a scheme $X$ satisfies the \emph{resolution property} provided that $X$ has enough locally frees, that is, for every quasi-coherent sheaf $\mathscr{M}$ there exists an exact map $\oplus_i \mathscr{V}_i\to \mathscr{M}\to 0$, for some family $\{\mathscr{V}_i: i\in I\}$ of vector bundles. In this case the class of infinite-dimensional vector bundles (in the sense of Drinfeld \cite{D}) constitutes the natural extension of the class of projective modules for non-affine schemes. And one can define the derived category of infinite-dimensional vector bundles again as the Verdier quotient  $${\bf D}(\Vect(X)):=\frac{{\bf K}(\Vect(X))}{\widetilde{\Vect\mkern 0mu}^{\scalebox{0.6}{\bf K}}\!\!(X)}.$$ This definition trivially agrees with $\mathbf{K}(\Proj(R))$ in case $X=\Spec(R)$ is affine. By using the class of very flat sheaves we obtain in Corollary \ref{eq.vbundles} the following meaningful consequence, which does not seem clearly to admit a direct proof (i.e. a proof without using very flat sheaves).

\medskip\par\noindent
{\bf Corollary 2.} Let $X$ be a quasi-compact and semi-separated scheme satisfying the resolution property (for instance if $X$ is divisorial \cite[Proposition 6(a)]{Mur2}). Murfet's and Neeman's derived category of flats, ${\bf D}(\Flat(X))$, is triangle equivalent to ${\bf D}(\Vect(X))$, the derived category of infinite-dimensional vector bundles.

\medskip\par Indeed the methods developed in this paper go beyond the class of very flat quasi-coherent sheaves. More precisely, we investigate which are the conditions that a subclass  $\A_{\rm qc}$ of flat quasi-coherent sheaves has to fulfil in order to get a triangle equivalent category to ${\bf D}(\Flat(X))$. In fact, we show that the triangulated equivalence comes from a Quillen equivalence between the corresponding models. We point out that there are well-known examples of non Quillen equivalent models with equivalent homotopy categories. The precise statement of our main result is in Theorem \ref{t.mc.general case} (see the setup in Section \ref{section.q_equivalent} for unexplained terminology).

\medskip\par\noindent
{\bf Theorem.} Let $X$ be a quasi-compact and semi-separated scheme and let $\mathcal P$ be a property of modules and $\A$ its associated class of modules. Assume that $\A\subseteq \Flat$, and that the following conditions hold:
\begin{enumerate}
\item The class $\A$ is Zariski-local.
\item For each $R=\OO_X(U)$, $U\in \U$, the pair $(\A_R,\B_R)$ is a hereditary cotorsion pair generated by a set.
\item For each $R=\OO_X(U)$, $U\in \U$, every flat $\A_R$-periodic module is trivial. 
\item $j_*(\A_{{\rm qc}(U_{\alpha})})\subseteq \A_{{\rm qc}(X)}$, for each $\alpha\subseteq \{0,\ldots,m\}$.
\end{enumerate}
Then the class $\A_{{\rm qc}}$ defines an abelian model category structure in $\Ch(\Qcoh(X))$ whose homotopy category $\mathbf D\mathbb(\A_{{\rm qc}})$ is triangle equivalent to  ${\bf D}(\mathrm{Flat}(X))$, induced by a Quillen equivalence between the corresponding model categories. 

\medskip\par It is interesting to observe that conditions (1), (2) and (3) in the previous theorem only involve properties of modules. Thus we find useful and of independent interest to explicitly state in Theorem \ref{t.mc.affine case} the affine version of the previous theorem (and give an easy proof). Section \ref{section.examples} is meant to make abundantly clear the variety of examples of classes of modules that fit into those conditions. Of particular interest is the class $\mathcal A(\kappa)$ of \emph{restricted flat Mittag-Leffler modules} considered in Theorem \ref{rest.fml} which has been widely studied in the literature in the recent years (see, for instance, \cite{EGPT,EGT,GT,HT,Sar}). So regarding this class, we obtain the following meaningful consequences:

\medskip\par\noindent
{\bf Corollary.} Let $\kappa$ be an infinite cardinal and $\A(\kappa)$ be the class of $\kappa$-restricted flat Mittag-Leffler modules (notice that $\A(\kappa)=\Proj(R)$ in case $\kappa=\aleph_0$).
\begin{enumerate}
\item Every pure acyclic complex with components in $\A(\kappa)$ has cycles in $\A(\kappa)$.
\item The categories ${\bf D}(\A(\kappa))$ and ${\bf K}(\Proj(R))$ are triangle equivalent.
\end{enumerate}
\medskip\par The proof of (1) can be found in Theorem \ref{rest.fml} whereas the proof of (2) is a particular instance of Theorem \ref{t.mc.affine case} with $\A=\A(\kappa)$. In the special case $\kappa=\aleph_0$, the statement (1) recovers a well-known result due to Benson and Goodearl  (\cite[Theorem 1.1]{BG}).

\section{Preliminaries}

\begin{ipg}\underline{Zariski local classes of modules}. Let $\mathcal{P}$ be a property of modules and let $\A$ be the corresponding class of modules satisfying $\mathcal P$, i.e. for any ring $R$, the class $\A_R$ consists of $M\in \Modl R$ such that $M$ satisfies $\mathcal P_R$. 
We define the class $\A_{{\rm qc}(X)}$ in $\Qcoh(X)$ (or just $\A_{\rm qc}$ if the scheme is understood) as the class of all quasi-coherent sheaves $\mathscr M$ such that, for each open affine $U\subseteq X$, the module of sections $\mathscr M(U)\in \A_{\OO_X(U)}$.
We will be only interested in those properties of modules $\mathcal P$ such that the property of being in $\A_{{\rm qc}(X)}$ can be tested on an open affine covering of $X$. In this case we will say that the class $\A$ of modules (associated to $\mathcal P$) is \emph{Zariski-local}.\\
The following is a specialization of the \emph{ascent-descent} conditions (\cite[Definition 3.4]{EGT}) that suffices to prove Zariski locality (see Vakil \cite[Lemma 5.3.2]{Vakil} and also \cite[\S 27.4]{SP}):

\begin{lem}\label{ZL}
The class of modules $\A$ associated to the property of modules $\mathcal P$ is Zariski-local if and only if satisfies the following:
\begin{enumerate}
\item If an $R$-module $M\in \A_R$, then $M_f\in \A_{R_f}$ for all $f\in R$.
\item If $\left (f_1,\ldots,f_n\right )=R$, and $M_{f_i}=R_{f_i}\otimes_R M\in \A_{R_{f_i}}$, for all $i\in \{1,\ldots,n\}$, then $M\in \A_R$.
\end{enumerate}
\end{lem}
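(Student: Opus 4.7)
The plan is to prove the two directions separately, appealing to the definition of $\A_{{\rm qc}(X)}$ on an affine scheme and to the basic geometric fact that distinguished opens form a base of every affine.

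For the forward direction, I would start from Zariski-locality and specialize to $X = \Spec R$. Property (1) would follow by equipping $\Spec R$ with the trivial covering $\{\Spec R\}$: the hypothesis $M \in \A_R$ means the section of $\widetilde{M}$ on this sole member of the covering lies in $\A$, so Zariski-locality places $\widetilde{M}$ in $\A_{{\rm qc}(\Spec R)}$; evaluating on the open affine $D(f) = \Spec R_f$ would then give $M_f \in \A_{R_f}$. Property (2) would come from the opposite choice of covering: given $(f_1, \ldots, f_n) = R$ with $M_{f_i} \in \A_{R_{f_i}}$, the family $\{D(f_i)\}$ is an open affine covering on which $\widetilde{M}$ has sections in $\A$, and Zariski-locality again yields $\widetilde{M} \in \A_{{\rm qc}(\Spec R)}$, so in particular $M = \widetilde{M}(\Spec R) \in \A_R$.

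For the backward direction, I would assume (1) and (2) and start from an open affine covering $\{U_\alpha\}$ on which $\mathscr{M}$ has sections in $\A$, aiming to show $\mathscr{M}(U) \in \A_{\OO_X(U)}$ for every open affine $U = \Spec S \subseteq X$. The strategy is to refine the induced covering $\{U \cap U_\alpha\}$ of $U$: at each point $x \in U \cap U_\alpha$ one can find an open neighborhood of $x$ that is simultaneously a distinguished open $D_U(f)$ of $U$ and a distinguished open $D_{U_\alpha}(g)$ of $U_\alpha$, because distinguished opens form a base of both affines inside their intersection. Quasi-compactness of the affine $U$ then lets me extract a finite subcovering $V_i = D_U(f_i) = D_{U_{\alpha(i)}}(g_i)$ of $U$, with $(f_1, \ldots, f_m) = S$. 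Applying (1) to each $\mathscr{M}(U_{\alpha(i)}) \in \A_{\OO_X(U_{\alpha(i)})}$ would give $\mathscr{M}(V_i) \in \A_{S_{f_i}}$, i.e.\ $N_{f_i} \in \A_{S_{f_i}}$ where $N = \mathscr{M}(U)$, and (2) would close the argument to produce $N \in \A_S$.

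The main technical point lies in the refinement step of the backward direction, which relies on the standard geometric fact that, for any two affine opens of a scheme, their intersection is covered by opens that are principal in both (the result is invoked via \cite[Lemma 5.3.2]{Vakil} and \cite[\S 27.4]{SP}). Once this input is granted, conditions (1) and (2) mesh cleanly: (1) transports the hypothesis from the $U_{\alpha(i)}$-sections down to the common refinement, and (2) glues the resulting local data back up to $U$.
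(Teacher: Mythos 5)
Your proof is correct and follows exactly the route the paper intends: the paper gives no written proof of Lemma~\ref{ZL}, instead deferring to Vakil's Affine Communication Lemma \cite[Lemma 5.3.2]{Vakil} and \cite[\S 27.4]{SP}, and your backward direction is precisely that argument (refine to opens simultaneously distinguished in $U$ and in some $U_{\alpha}$, use quasi-compactness, then apply (1) to descend and (2) to glue), while your forward direction is the evident specialization to $X=\Spec R$ with the two coverings $\{\Spec R\}$ and $\{D(f_i)\}$.
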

It is easy to see that the class $\Flat$ of flat modules is Zariski-local. A module $M$ is \emph{Mittag-Leffler} provided that the canonical map $M\otimes_R\prod_{i\in I}M_i\to \prod_{i\in I}M\otimes_R M_i$ is monic for each family of left $R$-modules $(M_i|\, i\in I)$. The classes $\FML$ (of flat Mittag-Leffler modules) and $\Proj$ (of projective modules) are also Zariski-local by 3.1.4.(3) and 2.5.2 in \cite[Seconde partie, 2.5.2]{RG}. The class $\RFML$ of \emph{restricted} flat Mittag-Leffler modules (in the sense of \cite[Example 2.1(3)]{EGT}) is also Zariski-local by \cite[Theorem 4.2]{EGT}.
\end{ipg}

\begin{ipg}\underline{Precovers, envelopes and complete cotorsion pairs}.\label{cotorsion-pairs} 
Throughout this section the symbol $\G$ will denote an abelian category. Let $\C$ be a class of objects in $\G$. A morphism
  $C\stackrel{\phi}{\rightarrow}{M}$ in $\G$ is called a
  \emph{$\C$-precover} if $C$ is in $\C$
  and $\Hom_{\G}(C',C) \to \Hom_{\G} (C',M) \to 0$ is exact
  for every $C' \in \C$. 
  If every object in $\G$ has a $\C$-precover, then the class
  $\C$ is called \emph{precovering}.
The dual notions are \emph{preenvelope} and \emph{preenveloping} class.

A pair $(\A,\B)$ of classes of objects in $\G$ is a \emph{cotorsion pair} if $\rightperp{\A}=\B$ and $\A = \leftperp{\B}$, where, given a class $\C$ of objects in $\A$, the right orthogonal  $\rightperp{\C}$ is defined to be the class of all $Y \in \G$ such that $\Ext^1_{\G}(C,Y) = 0$ for all $C \in \C$. The left orthogonal $\leftperp{\C}$ is defined similarly. A cotorsion pair $(\A,\B)$ is called \emph{hereditary} if $\Ext^i_{\G}(A,B) = 0$ for all $A \in \A$, $B \in \B$, and $i \geqslant 1$. A cotorsion pair $(\A,\B)$ is \emph{complete} if it has \emph{enough projectives} and \emph{enough injectives}, i.e.~for each $D \in \G$ there exist short exact sequences $0 \xrightarrow{} B \xrightarrow{} A \xrightarrow{} D \xrightarrow{} 0$ (enough projectives)
and $0 \xrightarrow{} D \xrightarrow{} B' \xrightarrow{} A' \xrightarrow{} 0$ (enough injectives)  with $A,A' \in \A$ and $B,B' \in \B$. It is then easy to observe that $A\xrightarrow{} D$ is an $\A$-precover of $D$ (such precovers are called \emph{special}). Analogously, $D\xrightarrow{} B'$ is a \emph{special} $\B$-preenvelope of $D$. A cotorsion pair $(\A,\B)$ is \emph{generated by a set} provided that there exists a \emph{set} $\mathcal{S}\subseteq \A$ such that $\rightperp{\mathcal{S}}=\B$. In case $\G$ is, in addition Grothendieck, it is known that a cotorsion pair generated by a set $\mathcal{S}$ which contains a generating set of $\G$ is automatically complete.

\end{ipg}
\begin{ipg}\underline{Exact model categories and Hovey triples}. In \cite{hovey} Hovey relates complete cotorsion pairs with abelian (or exact) model category structures.

An \emph{abelian model structure} on $\G$, that is, a model structure on $\G$ which is compatible with the abelian structure in the sense of \cite[Definition~2.1]{hovey}, corresponds by \cite[Theorem 2.2]{hovey} to a triple $(\C,\W,\F)$ of classes of objects in $\A$ for which $\W$ is thick\footnote{\,Recall that a class $\W$ in an abelian (or, more generally, in an exact) category $\G$ is \emph{thick} if it is closed under direct summands and satisfies that whenever two out of three of the terms in a short exact sequence are in $\W$, then so is the third.} and $(\C \cap \W,\F)$ and $(\C,\W \cap \F)$ are complete cotorsion pairs in $\G$.  In the model structure on $\G$ determined by such a triple, $\C$ is precisely the class of cofibrant objects, $\F$ is precisely the class of fibrant objects, and $\W$ is precisely the class of trivial objects (that is, objects weakly equivalent to zero). Such triple is often referred as a \emph{Hovey triple}.

Gillespie extends in \cite[Theorem~3.3]{G4} Hovey's correspondance, mentioned above, from the realm of abelian categories to the realm of weakly idempotent complete exact categories (\cite[Definition~2.2]{G4}). More precisely, if $\G$ is a  weakly idempotent complete exact categories (not necessarily abelian), then an \emph{exact model structure} on $\G$ (i.e. a model structure on $\G$ which is compatible with the exact structure in the sense of \cite[Definition~3.1]{G4}) corresponds to a Hovey triple $(\C,\W,\F)$ in $\G$.

\end{ipg}
\begin{ipg}\underline{Deconstructible classes}.\label{deconst}
A well ordered direct system,
  $(M_{\alpha}:\, \alpha\le \lambda)$, of objects in $\G$ is
  called \emph{continuous} if $M_0=0$ and, for each limit
  ordinal $\beta\leq \lambda$, we have $M_{\beta} =
  \varinjlim_{\alpha<\beta } M_{\alpha}$. If all morphisms in the
  system are monomorphisms, then the system is called a
  \emph{continuous directed union}.

  Let $\mathcal{S}$ be a class of objects in $\G$. An object $M$ in
  $\G$ is called \emph{$\mathcal{S}$-filtered} if there is a continuous
  directed union $(M_{\alpha}:\,  \alpha\le \lambda)$ of
  subobjects of $M$ such that $M = M_{\lambda}$ and for every
  $\alpha<\lambda$ the quotient $M_{\alpha+1}/M_{\alpha}$ is
  isomorphic to an object in $\mathcal{S}$. We denote by $\Filt(\mathcal{S})$ the
  class of all $\mathcal{S}$-filtered objects in $\G$. A class $\C$ is called \emph{deconstructible}
  provided that there exists a set $\mathcal{S}$ such that $\C=\Filt(\mathcal{S})$ (see \cite[Definition 1.4]{Sto}).
  It is then known by \cite[Theorem pg.195]{Sto} that any deconstructible class is precovering.

\end{ipg}
\begin{ipg}\underline{Chain complexes of modules}.
 We denote by $\Ch(\G)$ the category
of unbounded chain complexes of objects in $\G$, i.e. complexes $G_\bullet$ of
the form
$$\cdots\to G_{n+1}\xrightarrow{d^G_{n+1}}G_n\xrightarrow{d^G_{n}}
G_{n-1}\to\cdots.$$
We will denote by $Z_n G_\bullet$ the \emph{$n$-cycle of} $G$,
i.e. $Z_nG=\Ker(d^G_n)$. Given a chain complex $G$ the
\emph{$n^{th}$-suspension of $G$}, $\Sigma^n G$, is the complex
defined as $(\Sigma^n G)_k=G_{k-n}$ and
$d^{\Sigma^n G}_k=(-1)^n d_{k-n}$. And for a given object $A\in \G$,
the \emph{$n$-disk} complex $D^n(A)$ is the complex with the object $A$ in the
components $n$ and $n-1$, $d_n$ as the identity map, and 0
elsewhere.

We denote by ${\bf K}(\G)$ the homotopy category of $\mathcal G$, i.e. ${\bf K}(\G)$ has the same objects as $\Ch(\G)$ and the morphisms are the homotopy classes of morphisms of chain complexes.

In case $\G=\Modl R$, we will denote $\Ch(\G)$ (resp. ${\bf K}(\G)$) simply by $\Ch(R)$ (resp. ${\bf K}(R)$).
  Given a class $\C$ in $\G$, we shall consider the following classes of chain complexes:

\begin{itemize}
\item $\Ch(\C)$ (resp. ${\bf K}(\C)$)  is the full
  subcategory of $\Ch(\G)$ (resp. of ${\bf K}(\C)$)  of all complexes $C_\bullet\in\Ch(\G)$ such that
  $C_n\in\C$. 
\item $\Ch_{\textrm{ac}}(\C)$ (resp. ${\bf K}_{\textrm{ac}}(\C)$) is the class of all acyclic complexes in $\Ch(\C)$ (resp. in ${\bf K}(\C)$).
\item $\widetilde{\C}$ (resp. $\widetilde{\C\mkern 0mu}^{\scalebox{0.6}{\bf K}}$) is the class class of all complexes
  $C_\bullet\in \Ch_{\textrm{ac}}(\C)$ (resp. $C_\bullet\in {\bf K}_{\textrm{ac}}(\C)$) with the cycles $Z_nC_\bullet$ in $\C$ for all $n\in \bbZ$. A complex in $\widetilde{\C}$ is called a \emph{$\mathcal C$ complex}.

\item If $(\A, \B)$ is a cotorsion pair in $\G$, then $dg(\A)$ is
  the class of all complexes $A_\bullet\in \Ch(\A)$ such that every morphism
  $f\mathcolon A_\bullet\to B_\bullet$, with $B_\bullet$ a $\B$ complex, is null
  homotopic. Since $\Ext^1_{\G}(A_n, B_n)=0$ for every $n\in \bbZ$, a
  standard formula allows to infer that $dg(\A)={}^\perp{} \tilclass
  \B$.
  Analogously, $dg(\B)$ is the class of all complexes $B_\bullet \in \Ch(\B)$ such
  that every morphism $f\mathcolon A_\bullet\to B_\bullet$, with $A_\bullet$ an $\A$ complex, is null
  homotopic. Hence $dg(\B)=\widetilde{\A}{}^\perp{}$.
\end{itemize}
\end{ipg}

\section{Very flat modules and sheaves}\label{section.veryflat}
One of the main application of the results in this paper concerns the classes of very flat modules and very flat quasi-coherent sheaves, as defined by Positselski in \cite{P}. In the present section we summarize all relevant definitions and properties regarding this class and that will be relevant in the sequel.
\begin{ipg} 
\underline{Very flat and contraadjusted modules}. Let us consider the set $\mathcal S=\{R[r^{-1}]:\ r\in R\}$ and let  $(\VF(R),\CA(R))$ the complete cotorsion pair generated by $\mathcal S$. The modules in the class $\VF(R)$ are called \emph{very flat} and the modules in the class $\CA(R)$ are called \emph{contraadjusted}. It is then clear that every projective module is very flat, and that every very flat module is, in particular, flat. In fact it is easy to observe that every very flat module has finite projective dimension $\leq 1$. Thus, the complete cotorsion pair $(\VF,\CA)$ is automatically hereditary and $\CA$ is closed under quotients. We finally notice that  $L$ is very flat in any short exact sequence $0\to L\to V\to M\to 0$ in which $V$ is very flat and $\pd_R(M)\leq 1$ (where $\pd_R(M)$ is the projective dimension of $M$).
\end{ipg}
\begin{prop}[Positselski]
The class of very flat modules is Zariski-local.
\end{prop}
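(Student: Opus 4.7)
By Lemma \ref{ZL}, I would verify conditions (1) and (2) for the class $\VF$. The proof rests on two structural facts already laid out in the preamble: $\VF(R)$ is the left-hand side of the complete cotorsion pair generated by the set $\mathcal S_R = \{R[r^{-1}] : r \in R\}$, and every very flat module has $\pd_R \leq 1$.

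For the ascent condition, the starting observation is that tensoring with $R_f$ preserves the generators, since $R[r^{-1}] \otimes_R R_f \cong R_f[(r/1)^{-1}] \in \mathcal S_{R_f}$. Any $M \in \VF(R)$ is a direct summand of an $\mathcal S_R$-filtered module, by completeness of the cotorsion pair and the Eklof-type closure of $\leftperp{\CA(R)}$ under transfinite extensions. Because $R_f$ is $R$-flat, base-changing preserves the continuous chain of monomorphisms and the successive quotients, turning this into an $\mathcal S_{R_f}$-filtration of $M_f$; closure of $\VF(R_f)$ under direct summands then yields $M_f \in \VF(R_f)$.

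For descent, assume $(f_1,\dots,f_n) = R$ and $M_{f_i} \in \VF(R_{f_i})$ for every $i$. The first step is to promote local very-flatness to global very-flatness: every generator $R_{f_i}[s^{-1}]$ of $\VF(R_{f_i})$ with $s = r/f_i^k$ equals $R[(f_i r)^{-1}] \in \mathcal S_R$, so an $\mathcal S_{R_{f_i}}$-filtration doubles as an $\mathcal S_R$-filtration after restriction of scalars. Hence $M_{f_i} \in \VF(R)$, and the same argument applied to iterated localizations shows $M_{f_{i_1}\cdots f_{i_k}} \in \VF(R)$ for every subset of indices.

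Now I would invoke the Čech complex
\[
0 \to M \to \bigoplus_i M_{f_i} \to \bigoplus_{i<j} M_{f_i f_j} \to \cdots \to M_{f_1 \cdots f_n} \to 0,
\]
which is exact precisely because $(f_1,\dots,f_n) = R$, and whose terms to the right of $M$ all lie in $\VF(R)$ by the previous step. The key closure property is that $\VF(R)$ is closed under kernels of epimorphisms: for $0 \to A \to V \to W \to 0$ with $V, W \in \VF(R)$ and any contraadjusted $C$, the long exact sequence of $\Ext_R(-,C)$ gives $\Ext^1_R(A, C) = 0$ from $\Ext^1_R(V, C) = 0$ and $\Ext^2_R(W, C) = 0$, the latter by $\pd_R(W) \leq 1$. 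Walking back through the resolution from the rightmost term, each image syzygy is inductively in $\VF(R)$, and hence so is $M$. I expect this descent step to be the main obstacle; the ascent half is essentially base change of generators, whereas descent requires both the promotion of local to global very-flatness and the dimension-shifting made possible by $\pd_R \leq 1$, which together let the Čech complex function as a resolution of $M$ inside $\VF(R)$.
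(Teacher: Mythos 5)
Your proof is correct and follows the same strategy as the paper, namely verifying the two conditions of Lemma \ref{ZL}. The paper's proof consists only of citations to Positselski's Lemmas 1.2.2(b) and 1.2.6(a); what you have written out (base change of the $\mathcal S_R$-filtration for ascent, and for descent the promotion of the generators $R_{f_i}[s^{-1}]$ to elements of $\mathcal S_R$ followed by the exact \v{C}ech coresolution combined with the resolving property of $\VF(R)$ via $\pd\leq 1$) is essentially the content of those cited lemmas, so the two arguments agree in substance.
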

\begin{proof}
Condition $(1)$ of Lemma \ref{ZL} holds by \cite[Lemma 1.2.2(b)]{P}.\\
Condition $(2)$ of Lemma \ref{ZL} follows from \cite[Lemma 1.2.6(a)]{P}.
\end{proof}
\begin{ipg} \underline{Very flat and contraadjusted quasi-coherent sheaves}.
Let $X$ be any scheme. A quasi-coherent sheaf $\mathscr{M}$ is \emph{very flat} if there exists an open affine covering $\U$ of $X$ such that $\mathscr{M}(U)$ is a very flat $\mathcal O_X(U)$-module for each $U\in \U$. By the previous proposition, the definition of very flat quasi-coherent sheaf is independent of the choice of the open affine covering.  A quasi-coherent sheaf $\mathscr{N}$ is \emph{contraadjusted} if $\Ext^n(\mathscr{M},\mathscr{N})=0$ for each very flat quasi-coherent sheaf $\mathscr{M}$ and every integer $n\geq 1$. 

Since the class of very flat modules is resolving (i.e. closed under kernels of epimorphisms) we infer that the class of very flat quasi-coherent sheaves is also resolving.
\end{ipg}
\begin{ipg} 
\underline{Very flat generators in $\Qcoh(X)$}.
Let $X$ be a quasi-compact and semi-separated sche\-me, with $\U=\{U_0,\cdots, U_d\}$ a semi-separated finite affine covering of $X$.
Let $U=U_{i_0}\cap\cdots\cap~U_{i_p}$ be any intersection of open sets in the cover $\mathfrak U$ and let $j:U\hookrightarrow X$ be the inclusion of $U$ in $X$. The inverse image functor $j^*$ is just the restriction, so it is exact and preserves quasi-coherence. The direct image functor $j_*$ is exact and preserves quasi-coherence because $j:U\hookrightarrow X$ is an affine morphism, due to the semi-separated assumption. Thus we have an adjunction $(j^*,j_{*})$ with $j_{*}:\Qcoh(U)\to \Qcoh(X)$ and $j^*:\Qcoh(X)\to \Qcoh(U)$.

The proof of the next proposition is implicit in \cite[Proposition 1.1]{Leo} (see also Murfet \cite[Proposition 3.29]{Mur} for a very detailed treatment) by noticing that the direct image functor $j_{*}$ preserves not just flatness but in fact \emph{very} flatness (by \cite[Corollary 1.2.5(b)]{P}). The reader can find a short and direct proof in \cite[Lemma 4.1.1]{P}.
\begin{prop}\label{prop.vf.generators}
Let $X$ be a quasi-compact and semi-separated scheme. Every quasi-co\-he\-rent sheaf is a quotient of a very flat quasi-coherent sheaf. Therefore $\Qcoh(X)$ possesses  a family of very flat generators.
\end{prop}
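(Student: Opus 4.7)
The plan is to reduce the problem to the affine case patch-by-patch and then assemble the resulting local covers into a global one via the affine pushforwards $(j_i)_*$, exploiting that these preserve very flatness. The final assertion about a family of very flat generators follows at once from the first: taking one very flat cover of each quasi-coherent sheaf (or rather, of each member of a set of generators of $\Qcoh(X)$) yields the desired family.

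Fix the finite semi-separated affine covering $\U=\{U_0,\ldots,U_d\}$ of $X$ with inclusions $j_i\colon U_i\hookrightarrow X$. By semi-separatedness each $j_i$ is an affine morphism, so $(j_i)_*$ is exact, preserves quasi-coherence, and by Positselski's \cite[Corollary~1.2.5(b)]{P} also preserves very flatness. For each $i$, the $\OO_X(U_i)$-module $\mathscr M(U_i)$ admits an epimorphism from a free module $F_i$ (e.g.\ $F_i=\OO_X(U_i)^{(\mathscr M(U_i))}$); since every free module is projective hence very flat, sheafifying gives a very flat epimorphism $\widetilde{F_i}\twoheadrightarrow\mathscr M|_{U_i}$ in $\Qcoh(U_i)$. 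Applying $(j_i)_*$ produces a very flat quasi-coherent sheaf $(j_i)_*\widetilde{F_i}$ on $X$ together with an epimorphism onto $(j_i)_*(\mathscr M|_{U_i})$.

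The remaining --- and main --- step is to assemble these local pieces into a single very flat sheaf $\mathscr V$ surjecting onto $\mathscr M$ itself, not merely onto the direct sum $\bigoplus_i(j_i)_*(\mathscr M|_{U_i})$. I would follow Leo's construction in \cite[Proposition~1.1]{Leo}, where an analogous assembly is carried out to produce \emph{flat} generators of $\Qcoh(X)$ on a quasi-compact semi-separated scheme; inspection of that construction reveals that it uses only that the class in question is Zariski-local, is preserved under the affine pushforwards $(j_i)_*$, and is closed under direct sums and extensions, all of which hold for very flat quasi-coherent sheaves. A short self-contained version is \cite[Lemma~4.1.1]{P}. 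The main obstacle is precisely this globalisation: a naive pullback of $\bigoplus_i(j_i)_*\widetilde{F_i}$ along the monomorphism $\mathscr M\hookrightarrow\bigoplus_i(j_i)_*(\mathscr M|_{U_i})$ given by the units only realises $\mathscr V$ as a subsheaf of a very flat sheaf, and the class $\VF$ is not closed under arbitrary subobjects. Overcoming this uses semi-separatedness crucially, via which every finite intersection $U_{i_0}\cap\cdots\cap U_{i_p}$ is again affine, so a Čech-style iterative assembly of the local very flat covers stays within $\VF$ and terminates in a bona fide very flat quasi-coherent sheaf on $X$.
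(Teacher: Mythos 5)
Your proposal is correct and takes essentially the same route as the paper, which likewise reduces to the affine case and defers the iterative, \v{C}ech-style assembly to \cite[Proposition 1.1]{Leo} (see also \cite[Proposition 3.29]{Mur} and \cite[Lemma 4.1.1]{P}), the key observation in both cases being that the affine direct images $(j_i)_*$ preserve very flatness by \cite[Corollary 1.2.5(b)]{P}. Your diagnosis of why the naive pullback along the unit maps fails is accurate; the only ingredient worth adding to your list is that the iteration also uses that $\VF$ is resolving, so that the kernels of the local free covers of the already--very flat restrictions stay very flat before being pushed forward.
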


\end{ipg}

\begin{ipg}\underline{The very flat cotorsion pair in $\Qcoh(X)$}. For any scheme $X$, the class $\VF(X)$ of very flat quasi-coherent sheaves is deconstructible (by \cite[Corollary 3.14]{EGPT}). Therefore the class of very flat quasi-coherent sheaves is a precovering class (see \ref{deconst}). If, in addition, the scheme $X$ is quasi-compact and semi-separated we infer from \cite[Corollary 3.15]{EGPT} and \cite[Corollary 4.1.2]{P} that the pair $(\VF(X),\CA(X))$ is a complete hereditary cotorsion pair in $\Qcoh(X)$ (where $\CA(X)$ denotes the class of all contraadjusted quasi-coherent sheaves on $X$).

By \cite[Lemma 1.2.2(d)]{P} the class of very flat modules (and hence the class of very flat quasi-coherent sheaves) is closed under tensor products. Thus, in case $X$ is quasi-compact and semi-separated, \cite[Theorem 4.5]{EGPT} yields a cofibrantly generated and monoidal model category structure in $\mathrm{Ch}(\Qcoh(X))$ where the weak equivalences are the homology isomorphisms. The cofibrations (resp. trivial cofibrations) are monomorphisms whose cokernels are dg-very flat complexes (resp. very flat complexes). The fibrations (resp. trivial fibrations) are epimorphisms whose kernels are dg-contraadjusted complexes (resp. contraadjusted complexes). Therefore the corresponding triple is $$(dg(\VF(X)),\Ch_{\rm ac}(\Qcoh(X)),dg(\CA(X))).$$
\end{ipg}
\section{The property of modules involved. Examples}\label{section.examples}
As we will see in the next sections, we are mainly concerned in deconstructible classes of modules that are closed under certain periodic modules. We start by recalling the notion of $\C$-periodic module with respect to a class $\C$ of modules.
\begin{defn}
Let $\mathcal C$ be a class of modules. A module $M$ is called $\mathcal C$-periodic if there exists a short exact sequence $0\to M\to C\to M\to 0$, with $C\in \mathcal C$.
\end{defn}
The following proposition relating flat periodic $\mathcal A$-modules and acyclic complexes with components in $\mathcal A$ is standard, but relevant for our purposes. The reader can find a proof in \cite[Proposition 1 and Proposition 2]{EFI}.
\begin{prop}\label{prop.periodic}
Let $\mathcal A$ be a class of modules closed under direct sums and direct summands. The following are equivalent:
\begin{enumerate}
\item Every cycle of an acyclic complex with flat cycles and with components in $\mathcal A$ belongs to $\mathcal A$.
\item Every flat $\mathcal A$-periodic module belongs to $\mathcal A$.
\end{enumerate}
\end{prop}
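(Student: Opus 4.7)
The plan is to give the standard two-direction argument, mirroring the approach in \cite{EFI} referenced in the paper: the easy implication $(1)\Rightarrow(2)$ by splicing a short exact sequence into a doubly infinite acyclic complex, and the more substantive implication $(2)\Rightarrow(1)$ by forming a direct sum of all the cycles to exhibit $\bigoplus_n Z_n$ as a flat $\mathcal{A}$-periodic module.

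For $(1)\Rightarrow(2)$, let $M$ be flat and $\mathcal{A}$-periodic, witnessed by a short exact sequence $0\to M\xrightarrow{\iota} C\xrightarrow{\pi} M\to 0$ with $C\in\mathcal{A}$. Setting $d=\iota\circ\pi\colon C\to C$, one checks that $d^2=0$ (since $\pi\iota=0$), so the doubly infinite complex
\[
X_\bullet\colon\quad \cdots\longrightarrow C\xrightarrow{\,d\,} C\xrightarrow{\,d\,} C\longrightarrow\cdots
\]
is acyclic with $Z_n X_\bullet\cong M$ for every $n$. The components lie in $\mathcal{A}$ and the cycles are flat (as $M$ is flat), so hypothesis (1) forces $M\in\mathcal{A}$.

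For $(2)\Rightarrow(1)$, let $X_\bullet$ be an acyclic complex with $X_n\in\mathcal{A}$ and flat cycles $Z_n:=Z_n X_\bullet$. From the short exact sequences $0\to Z_{n+1}\to X_{n+1}\to Z_n\to 0$, taking the direct sum over $n\in\mathbb{Z}$ yields
\[
0\longrightarrow \bigoplus_{n\in\mathbb{Z}} Z_{n+1}\longrightarrow \bigoplus_{n\in\mathbb{Z}} X_{n+1}\longrightarrow \bigoplus_{n\in\mathbb{Z}} Z_n\longrightarrow 0.
\]
Re-indexing identifies $\bigoplus_n Z_{n+1}\cong \bigoplus_n Z_n$, so this exhibits $\bigoplus_n Z_n$ as an $\mathcal{A}$-periodic module, with middle term $\bigoplus_n X_n\in\mathcal{A}$ by closure of $\mathcal{A}$ under direct sums. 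Moreover $\bigoplus_n Z_n$ is flat since each $Z_n$ is. By (2) we get $\bigoplus_n Z_n\in\mathcal{A}$, and then closure under direct summands yields $Z_n\in\mathcal{A}$ for every $n$.

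The main obstacle is essentially bookkeeping rather than conceptual: one must verify that the glued map $d=\iota\pi$ really produces an acyclic complex whose cycles coincide with $M$ in the first direction, and in the second direction one must be careful that the direct sum of the short exact sequences remains short exact (which it does, as direct sums are exact in a module category) and that the index shift legitimately identifies the outer terms. No further hypotheses beyond closure of $\mathcal{A}$ under direct sums and direct summands are needed.
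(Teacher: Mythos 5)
Your proof is correct and is essentially the argument the paper relies on (it cites \cite[Propositions 1 and 2]{EFI} rather than writing it out): splicing $d=\iota\pi$ to build the periodic acyclic complex for $(1)\Rightarrow(2)$, and summing the cycle sequences over $\mathbb{Z}$ with an index shift to exhibit $\bigoplus_n Z_n$ as a flat $\mathcal{A}$-periodic module for $(2)\Rightarrow(1)$. All the needed verifications (that $\ker(\iota\pi)=\operatorname{im}(\iota\pi)\cong M$, exactness of direct sums, and closure under sums and summands) are present, so nothing is missing.
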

We are interested in deconstructible classes of modules $\A$ satisfying condition $(2)$ in the previous proposition. Of course the first trivial example is the class $\Flat(R)$ of flat modules itself. Since the class of all flat Mittag-Leffler modules is closed under pure submodules, this class also trivially yields an example of a class $\A$ satisfying that every flat $\mathcal A$-periodic module is in $\A$. However this class has an important drawback: it is only deconstructible in the trivial case of a perfect ring (see Herbera and Trlifaj \cite[Corollary 7.3]{HT}). This setback can be remedied by considering the \emph{restricted} flat Mittag-Leffler modules, in the sense of \cite[Example 2.1(3)]{EGT}, as we will show in Theorem \ref{rest.fml} below.

Now we will provide with other interesting non-trivial examples of such classes $\A$ satisfying condition (2) above, and that will be relevant in the applications of our main results in the next sections.  

The first example is the class $\A=\Proj(R)$ of projective $R$-modules and goes back to Benson and Goodearl \cite[Theorem 1.1]{BG}.
\begin{prop}\label{bg}
Let $\Proj(R)$ be the class of all projective $R$-modules.
Every flat $\Proj(R)$-periodic module is projective. As a consequence every pure acyclic complex of projectives is contractible (i.e. has projective cycles).
\end{prop}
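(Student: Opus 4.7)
The plan is to reduce the statement to the original Benson--Goodearl result \cite{BG}. Given a short exact sequence $0 \to M \xrightarrow{i} P \xrightarrow{\pi} M \to 0$ with $M$ flat and $P$ projective, I first form the endomorphism $f := i\pi \colon P \to P$. It satisfies $f^2 = 0$ and $\Ker f = \Img f \cong M$, producing a two-sided acyclic complex
$$\cdots \to P \xrightarrow{f} P \xrightarrow{f} P \to \cdots$$
of projectives whose cycles are all isomorphic to the flat module $M$; in particular this is a pure acyclic complex of projectives. The goal is to conclude that $M$ is projective.

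The projectivity of $M$ I would then establish in two stages: a reduction to the countably generated case, and an explicit splitting argument there. For the reduction, Kaplansky's theorem decomposes $P$ as a direct sum of countably generated projectives, and a Hill/Eklof-style continuous filtration refines the periodic sequence into sub-sequences of the same periodic form with countably generated flat terms; Eklof's lemma then transports projectivity from the countably generated sub-pieces to $M$. In the countably generated case, Lazard's theorem presents $M$ as the direct limit of a countable chain of finitely generated free modules, and splicing this mapping telescope against the periodic sequence produces, in the spirit of an Eilenberg swindle, an explicit retraction of a countably generated free module onto $M$.

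For the consequence, let $X_\bullet$ be a pure acyclic complex of projectives with cycle submodules $Z_n \subseteq X_n$. Each $Z_n$ is a pure submodule of a flat module and therefore flat. Setting $M := \bigoplus_{n \in \bbZ} Z_n$ and taking the direct sum of the short exact sequences $0 \to Z_n \to X_n \to Z_{n-1} \to 0$, the reindexing isomorphism $\bigoplus_n Z_{n-1} \cong \bigoplus_n Z_n$ yields a short exact sequence $0 \to M \to \bigoplus_n X_n \to M \to 0$ with $M$ flat and $\bigoplus_n X_n$ projective. By the first part, $M$ is projective, hence so is every direct summand $Z_n$; the short exact sequences $0 \to Z_n \to X_n \to Z_{n-1} \to 0$ therefore split and $X_\bullet$ is contractible.

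The main obstacle is the core claim of the first part. The periodicity together with flatness of $M$ only formally yields a pure acyclic complex of projectives with cycles $M$, and extracting from this a genuine direct summand decomposition is not automatic: it relies on the non-formal Benson--Goodearl telescope construction in the countably generated case together with the Eklof-style filtration argument that makes the reduction to that case possible. The corollary, by contrast, is a short formal consequence once the periodic statement is in hand.
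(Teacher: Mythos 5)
Your proposal takes essentially the same route as the paper: the periodic statement is exactly Benson--Goodearl's \cite[Theorem 1.1]{BG}, which the paper states without proof by citation (your outline of the Kaplansky/filtration reduction to the countably generated case and the telescope argument there is a fair sketch of what that citation hides), and your direct-sum reindexing derivation of the contractibility consequence is precisely the argument behind Proposition~\ref{prop.periodic}, which is what the paper invokes. So the attempt is correct in approach and adds nothing beyond, nor omits anything from, the paper's own (purely citational) treatment.
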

The second application is the class $\A=\VF(R)$ of very flat modules (this is due to \v S\v t'ov\' \i\v cek, personal communication).
\begin{prop}
Every flat $\VF(R)$-periodic module is very flat. As a consequence every pure acyclic complex of very flat modules has very flat cycles.
\end{prop}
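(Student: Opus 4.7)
The ``as a consequence'' part is immediate from Proposition~\ref{prop.periodic} applied to $\mathcal{A}=\VF(R)$, since $\VF(R)$ is closed under direct sums and direct summands (being the left class of a cotorsion pair). So the core of the proof is to show: if $M$ is flat and sits in an exact sequence $0\to M\to V\to M\to 0$ with $V\in \VF(R)$, then $M\in \VF(R)$. Since $(\VF(R),\CA(R))$ is a complete cotorsion pair, this is equivalent to $\Ext^1_R(M,C)=0$ for every $C\in \CA(R)$.

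Fix such $C$ and apply $\Hom_R(-,C)$ to the periodic sequence. By heredity of the cotorsion pair, $\Ext^i_R(V,C)=0$ for every $i\geq 1$, and the associated long exact sequence collapses to isomorphisms
\[
\Ext^i_R(M,C)\;\cong\;\Ext^{i+1}_R(M,C)\qquad (i\geq 1).
\]
Consequently $\Ext^1_R(M,C)\cong \Ext^n_R(M,C)$ for every $n\geq 1$, and I have reduced to showing $\Ext^n_R(M,C)=0$ for some $n$.

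To produce such vanishing I exploit the fact that every very flat module has projective dimension at most one. Fix a projective resolution $0\to Q\to P\to V\to 0$ of $V$ and pull the inclusion $M\hookrightarrow V$ back along the surjection $P\to V$; this yields a submodule $\widetilde P:=P\times_V M\subseteq P$ fitting in two short exact sequences
\[
0\to \widetilde P\to P\to M\to 0 \qquad \text{and}\qquad 0\to Q\to \widetilde P\to M\to 0,
\]
in both of which $\widetilde P$ is flat. Applying $\Hom_R(-,C)$ to the first sequence and using $\Ext^i_R(P,C)=0$ for $i\geq 1$ identifies $\Ext^{i+1}_R(M,C)$ with $\Ext^i_R(\widetilde P,C)$; combining this with the periodicity of the previous step gives a natural isomorphism $\Ext^1_R(M,C)\cong \Ext^1_R(\widetilde P,C)$.

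The plan is now to iterate the pullback construction on $\widetilde P$ — whose second exact sequence above plays the role of a ``periodic''-like datum for the next round — in order to produce, after finitely many steps, a flat \emph{projective}-periodic module whose $\Ext^1_R(-,C)$ still equals $\Ext^1_R(M,C)$. Proposition~\ref{bg} (Benson--Goodearl) then forces such a module to be projective, so its $\Ext^1_R(-,C)$ vanishes and unwinding the isomorphisms gives $\Ext^1_R(M,C)=0$. The main obstacle is exactly this iteration: one must organize the successive pullbacks and their two-sided exact sequences carefully enough that the limiting periodic flat module is genuinely projective-periodic, not merely very-flat-periodic, so that Proposition~\ref{bg} may be applied. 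Flatness of $M$ is indispensable throughout, as it is what keeps all the syzygies appearing in the construction inside the class of flat modules and hence within the scope of Proposition~\ref{bg}.
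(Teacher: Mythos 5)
Your reduction to $\Ext^1_R(M,C)=0$ for contraadjusted $C$, the dimension-shifting isomorphisms $\Ext^i_R(M,C)\cong\Ext^{i+1}_R(M,C)$, and the intention to invoke Proposition~\ref{bg} are all the right ingredients, but the proof as written has a genuine gap: the ``iteration of pullbacks'' that is supposed to produce a flat \emph{projective}-periodic module is never carried out, and it is not clear that it can be. Your pullback $\widetilde P=P\times_V M$ sits in $0\to\widetilde P\to P\to M\to 0$ and $0\to Q\to\widetilde P\to M\to 0$; neither sequence (nor any evident combination of their iterates) exhibits a fixed flat module as both the kernel and the cokernel of a single short exact sequence with projective middle term, which is what Proposition~\ref{bg} requires. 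Since you concede that organizing this limit is ``the main obstacle,'' the argument stops exactly where the real work begins.

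The missing step can be done in one move, and this is what the paper does. Choose a projective presentation $0\to M_1\to P\to M\to 0$ (so $M_1$ is flat) and apply the horseshoe lemma to the periodic sequence $0\to M\to V\to M\to 0$, using this same presentation at both ends. This yields a commutative diagram with exact rows and columns whose top row is $0\to M_1\to Q\to M_1\to 0$, where $Q=\Ker(P\oplus P\to V)$. Because $\pd_R(V)\leq 1$, the module $Q$ is projective, so $M_1$ is a flat $\Proj(R)$-periodic module and Proposition~\ref{bg} forces $M_1$ to be projective. Hence $\pd_R(M)\leq 1$, and your own long exact sequence $0=\Ext^1_R(V,C)\to\Ext^1_R(M,C)\to\Ext^2_R(M,C)=0$ closes the argument. (Your treatment of the ``as a consequence'' part via Proposition~\ref{prop.periodic} is fine.)
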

\begin{proof}
Let $0\to F\to G\to F\to 0$ be an exact sequence with $F$ flat and $G$ very flat. Let $0\to F_1\to P\to F\to 0$ be an exact sequence with $P$ projective; then $F_1$ is flat. An application of the horseshoe lemma gives the following commutative diagram
 \[\xymatrix{
&0 \ar[d]  & 0 \ar[d] & 0 \ar[d] \\
0 \ar[r] & F_1 \ar[d] \ar[r] &Q\ar[r] \ar[d] & F_1\ar[d]  \ar[r] & 0 \\
0 \ar[r] & P \ar[d] \ar[r] &P\oplus P\ar[r] \ar[d] & P\ar[d]  \ar[r] & 0  \\
0 \ar[r] & F \ar[d]\ar[r] &G\ar[r] \ar[d] & F\ar[d]  \ar[r] & 0 \\
&0 & 0 & 0.
}
\] 
where  $Q$ is projective, since $\pd_R(G)\leq 1$. Thus, by Proposition \ref{bg}, $F_1$ is projective and therefore $\pd_R(F)\leq 1$.
Let $C\in \CA(R)$. Then applying $\Hom_R(-,C)$ to the short exact sequence yields $0=\Ext^1_R(G, C)\to \Ext^1_R(F, C)\to \Ext^2_R(F, C)=0$, hence $F\in \VF(R)$. Finally, the consequence follows from Proposition \ref{prop.periodic}(1) (with $\A=\VF(R)$).
\end{proof}
The last example is the announced deconstructible class of \emph{restricted} flat Mittag-Leffler modules as defined in \cite[Example 2.1(3)]{EGT}. 
\begin{thm}\label{rest.fml}
let $\kappa$ be an infinite cardinal and $\A(\kappa)$ be the class of $\kappa$-restricted flat Mittag-Leffler modules.
Every flat $\A(\kappa)$-periodic module is in $\A(\kappa)$. As a consequence every pure acyclic complex with components in $\A(\kappa)$ has cycles in $\A(\kappa)$.
\end{thm}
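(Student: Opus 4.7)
First, the ``consequence'' follows from the first assertion via Proposition~\ref{prop.periodic}: the class $\A(\kappa)$ is contained in $\Flat$ and is closed under direct sums and direct summands, and the cycles of a pure acyclic complex with flat components are themselves flat (being pure submodules of flat modules). Thus every cycle of a pure acyclic complex with components in $\A(\kappa)$ is a flat $\A(\kappa)$-periodic module, and the first assertion applies.

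For the main assertion, let $0 \to F \xrightarrow{\iota} G \xrightarrow{\pi} F \to 0$ be short exact with $F$ flat and $G \in \A(\kappa)$. I will use the description $\A(\kappa) = \Filt(\mathcal{S}_\kappa)$, where $\mathcal{S}_\kappa$ denotes the set of isomorphism classes of $<\kappa$-generated projective modules (for $\kappa=\aleph_0$ this is Kaplansky's theorem and the assertion reduces to Proposition~\ref{bg}). The plan is to apply Hill's lemma to an $\mathcal{S}_\kappa$-filtration of $G$, producing a family $\mathcal{L}$ of $\mathcal{S}_\kappa$-filtered submodules of $G$ closed under arbitrary sums and intersections, such that each inclusion $H \subseteq H'$ in $\mathcal{L}$ has $\mathcal{S}_\kappa$-filtered quotient, and every $<\kappa$-subset of $G$ lies in some member of $\mathcal{L}$ of cardinality $<\kappa$.

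Given $X \subseteq F$ with $|X| < \kappa$, I will construct $H \in \mathcal{L}$ of cardinality $<\kappa$ with $\iota(X) \subseteq H$ and $\pi(H) = \iota^{-1}(H)$ by a countable iteration: starting from a small member of $\mathcal{L}$ containing $\iota(X)$, I alternately enlarge $H_n$ by chosen $\pi$-preimages of a $<\kappa$-generating set of $H_n \cap \iota(F)$ and by $\iota(\pi(H_n))$; the union $H := \bigcup_n H_n$ still lies in $\mathcal{L}$ and remains $<\kappa$-generated. The restricted sequence $0 \to \iota^{-1}(H) \to H \to \pi(H) \to 0$ is then a flat $\Proj$-periodic sequence: $H$ is $<\kappa$-generated and $\mathcal{S}_\kappa$-filtered, hence projective, and $\pi(H) \subseteq F$ is flat. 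Applying Proposition~\ref{bg} forces $\pi(H)$ to be projective.

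The chief technical obstacles will be (i) ensuring that $\pi(H)$ is \emph{pure} in $F$, not merely a projective subobject, and (ii) upgrading the resulting directed family of $<\kappa$-generated projective submodules of $F$ into a continuous well-ordered $\mathcal{S}_\kappa$-filtration. For (i), I would enlarge the iteration to also keep $H + \iota(F) \in \mathcal{L}$, whereby the Hill quotient property gives $F/\pi(H) \cong G/(H+\iota(F))$ flat, yielding purity. For (ii), the candidate family $\{\pi(H) : H \in \mathcal{L},\ \pi(H) = \iota^{-1}(H)\}$ inherits closure under sums and intersections from $\mathcal{L}$, and a standard transfinite exhaustion produces a continuous chain with $\mathcal{S}_\kappa$-filtered successive quotients, establishing $F \in \Filt(\mathcal{S}_\kappa) = \A(\kappa)$.
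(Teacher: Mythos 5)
There is a genuine gap, and it sits at the very first step of your main argument: the identification $\A(\kappa)=\Filt(\mathcal{S}_\kappa)$ with $\mathcal{S}_\kappa$ the $<\kappa$-generated \emph{projective} modules is not the definition of the $\kappa$-restricted flat Mittag-Leffler modules. Following \cite[Example 2.1(3)]{EGT}, $\A(\kappa)$ is the class of modules filtered by $\leq\kappa$-\emph{presented flat Mittag-Leffler} modules, and for $\kappa>\aleph_0$ these building blocks need not be projective (e.g.\ the Baer--Specker module $\bbZ^{\bbN}$ is flat Mittag-Leffler of cardinality $2^{\aleph_0}$ but not free). Since any module filtered by projectives is itself projective, your $\Filt(\mathcal{S}_\kappa)$ collapses to $\Proj(R)$ for every $\kappa$, so your argument only reproves the case $\kappa=\aleph_0$, i.e.\ Benson--Goodearl (Proposition \ref{bg}). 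The collapse also breaks the decisive step of your plan: ``$H$ is $<\kappa$-generated and $\mathcal{S}_\kappa$-filtered, hence projective, \dots\ applying Proposition \ref{bg} forces $\pi(H)$ to be projective.'' With the correct filtration the small piece $H$ is only flat Mittag-Leffler, Proposition \ref{bg} does not apply to it, and one should not expect the filtration quotients of $F$ to be projective at all.

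The structural skeleton of your argument (Hill family for $G$, a countable back-and-forth closing off so that $f(H)=F\cap H$ and $f(H)$ is pure in $F$, then assembling a filtration of the whole short exact sequence) is essentially what the paper does, so that part is sound. What is missing is the correct mechanism for placing the quotients $F_{\alpha+1}/F_\alpha$ in the right class: the filtered short exact sequences $0\to F_{\alpha+1}/F_\alpha\to G_{\alpha+1}/G_\alpha\to F_{\alpha+1}/F_\alpha\to 0$ are pure (their cokernels being flat), and the flat Mittag-Leffler property passes to pure submodules \cite[Corollary 3.20]{GT}; it is this fact, not Benson--Goodearl applied to small pieces, that makes each $F_{\alpha+1}/F_\alpha$ a $\leq\kappa$-presented flat Mittag-Leffler module and hence gives $F\in\A(\kappa)$. (Your reduction of the ``consequence'' to Proposition \ref{prop.periodic} is fine.)
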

\begin{proof}
The proof mostly follows the pattern outlined in \cite{BG}; the main difference is that instead of direct sum decomposition, we work with filtrations and Hill Lemma (cf.\ \cite[Theorem 7.10]{GT}). Given a short exact sequence
\begin{equation}\label{BG-ses}
0 \to F \to G \stackrel f\to F \to 0
\end{equation}
with $F$ flat and $G \in \A(\kappa)$, we fix a Hill family $\clH$ for $G$. The goal is to pick a filtration $(G_\alpha \mid \alpha \leq \sigma)$ from $\clH$ such that for each $\alpha < \sigma$, $f(G_\alpha) = F \cap G_\alpha$, $f(G_\alpha) \subseteq_* F$, and $G_{\alpha+1}/G_\alpha$ is $\leq \kappa$-presented flat Mittag-Leffler.

Once this is achieved, we obtain a filtration of the whole short exact sequence \eqref{BG-ses} by short exact sequences of the form
\[ 0 \to F_{\alpha+1}/F_\alpha \to G_{\alpha+1}/G_\alpha \to F_{\alpha+1}/F_\alpha \to 0 \]
(putting $F_{\alpha} = f(G_\alpha) = F \cap G_\alpha$).
Since the property of being flat Mittag-Leffler passes to pure submodules (cf.\ \cite[Corollary 3.20]{GT}), this would make $F_{\alpha+1}/F_\alpha$ an $\leq\kappa$-presented flat Mittag-Leffler module and hence imply $F \in \A(\kappa)$. (Note that by \cite[Lemma 2.7 (1)]{EGPT}, each $\leq \kappa$-generated flat Mittag-Leffler module is (even strongly) $\leq \kappa$-presented.)

Put $G_0 = 0$. For limit ordinals $\alpha$, it suffices to take unions of already constructed submodules $G_\beta$, $\beta<\alpha$; note that by property (H2) in Hill Lemma, $G_\alpha \in \clH$ then. Having constructed modules up to $G_\alpha$ (and assuming $G_\alpha \neq G$), we construct $G_{\alpha+1}$ as follows: We pass to the quotient short exact sequence
\[ 0 \to F/F_\alpha \to G/G_\alpha \stackrel{\overline f}\to F/F_\alpha \to 0, \]
which, by assumption, satisfy that $F/F_\alpha$ is flat and $G/G_\alpha \in \A(\kappa)$. Note that $F/F_\alpha$, being (identified with) a pure submodule of $G/G_\alpha$, is flat Mittag-Leffler. The Hill family $\clH$ gives rise to family $\clH'$ for $G/G_\alpha$, which consists of factors of modules from $\clH$ (containing $G_\alpha$) by $G_\alpha$.

Let us first show that any $\leq\kappa$-generated submodule $Y$ of $G/G_\alpha$ can be enlarged to $\leq\kappa$-generated $\overline G \in \clH'$ with the property that $\overline f(\overline G) \subseteq_* F/F_\alpha$ and $\overline G \cap F/F_\alpha$ is $\leq\kappa$-generated. To this end, we construct inductively a chain of submodules $\overline G_n \in \clH'$ with union $\overline G$ (utilizing property (H2)). Let $\overline G_0$ be an arbitrary $\leq\kappa$-generated module $\overline G_0 \in \clH'$ containing $Y$ (obtained via (H4)). Assuming we have constructed $\overline G_n$, we get $\overline G_{n+1}$ by taking these steps:
\begin{enumerate}
\item Enlarge $\overline f(\overline G_n)$ to a $\leq\kappa$-generated pure submodule $X_n$ of $F/F_\alpha$; this is possible by \cite[Lemma 2.7 (2)]{EGPT} once we notice that $F/F_\alpha$, being a pure submodule of $G/G_\alpha$, is flat Mittag-Leffler.
\item Take $\leq\kappa$-generated $\overline G_{n+1} \in \clH'$ such that $X \subseteq \overline f(G_n')$; this is again possible by property (H4) of the Hill Lemma.
\end{enumerate}
We have $\overline f(\overline G) = \bigcup_{n \in \bbN} \overline f(\overline G) = \bigcup_{n \in \bbN} X_n \subseteq_* F/F_\alpha$. This also shows that $\overline f(\overline G)$ is flat Mittag-Leffler, hence $\leq\kappa$-presented. The short exact sequence
\[ 0 \to \overline G \cap (F/F_\alpha) \to \overline G \to \overline f(\overline G) \to 0 \]
now shows that $\overline G \cap (F/F_\alpha)$ is indeed $\leq\kappa$-generated.

Now iterate the claim as follows: Start with arbitrary $\leq\kappa$-generated non-zero $Y_0 \subseteq G/G_\alpha$ and obtain $\overline G_0$ from the claim. Enlarge it to $\overline G_1 \in \clH'$ satisfying $\overline G_0 \cap (F/F_\alpha) \subseteq \overline f(\overline G_1)$ (which we may do using (H4), since $\overline G_0 \cap (F/F_\alpha)$ is $\leq\kappa$-generated). Taking $Y_1 = \overline G_1 + \overline f(\overline G_1)$ and applying the claim, we get $\overline G_2$ etc. This way we obtain a chain
\[ \overline G_0 \cap (F/F_\alpha) \subseteq \overline f(\overline G_1) \subseteq \overline G_2 \cap (F/F_\alpha) \subseteq \overline f(\overline G_3) \subseteq \ldots, \]
so for $\overline G = \bigcup_{n \in \bbN} \overline G_n \in \clH'$ we have $\overline G \cap (F/F_\alpha) = \overline f(\overline G)$. Also the purity of $\overline f(\overline G)$ in $F/F_\alpha$ and being $\leq\kappa$-generated is ensured.

The desired module $G_{\alpha+1}$ is now the one satisfying $G_{\alpha+1}/G_\alpha = \overline G$.
\end{proof}
Note that in the case $\kappa = \aleph_0$, $\A(\kappa)$ is just the class of projective modules by \cite[Seconde partie, Section 2.2]{RG}, so this also covers the case of \cite{BG}.

\section{Quillen equivalent models for ${\bf K}(\Proj(R))$}

It is known (see Bravo, Gillespie and Hovey \cite[Corollary 6.4]{BGH}) that the homotopy category of projectives ${\bf K}(\Proj(R))$ can be realized as the homotopy category of the model $\M_{\rm proj}=(\Ch(\Proj(R)), \Ch(\Proj(R))^{\perp}, \Ch(R))$ in $\Ch(R)$. Now, by \cite[Remark 4.2]{G}, the class $\Ch(\Flat(R))$ induces model category in $\Ch(R)$ given by the triple, $$(\Ch(\Flat(R)),\Ch(\Proj(R))^{\perp}, dg(\Cot(R))).$$The last model is thus Quillen equivalent to $\M_{\rm proj}$. Therefore, its homotopy category, the derived category of flats ${\bf D}(\Flat(R))$, is triangulated equivalent to ${\bf K}(\Proj(R))$. The next theorem gives sufficient conditions on a class of modules $\A$ to get ${\bf D}(\A)$ and ${\bf D}(\Flat(R))$ to be triangulated equivalent. For concrete examples of such classes the reader should have in mind the classes of modules considered in Section \ref{section.examples}.
\begin{thm}\label{t.mc.affine case}
Let $\A\subseteq \Flat(R)$ be a class of modules such that:
\begin{enumerate}
\item The pair $(\A,\B)$ is a hereditary cotorsion pair generated by a set.
\item Every flat $\A$-periodic module is trivial.
\end{enumerate}
Then there is an abelian model category structure  $\mathcal M=(\mathrm{Ch}(\A),\Ch(\Proj(R))^{\perp},dg\widetilde{\B})$ in $\Ch(R)$. If we denote by $\mathbf D\mathbb(\A)$ the homotopy category
of $\M$, then   ${\bf D}(\mathrm{Flat}(R))$, $\mathbf D\mathbb(\A)$ and ${\bf K}(\Proj(R))$ are triangulated equivalent, induced by a Quillen equivalence between the corresponding model categories.
\end{thm}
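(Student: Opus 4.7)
The plan is to assemble $(\Ch(\A), \Ch(\Proj(R))^{\perp}, dg\widetilde{\B})$ into a Hovey triple, whence $\M$ will be abelian by Hovey's correspondence, and then to deduce the Quillen equivalences from the fact that all three model structures in question share the same class of trivial objects $\Ch(\Proj(R))^{\perp}$. As preparation, Gillespie's standard induction lifts the hereditary, set-generated cotorsion pair $(\A, \B)$ to two complete hereditary cotorsion pairs in $\Ch(R)$, namely $(\widetilde{\A}, dg\widetilde{\B})$ and $(dg\widetilde{\A}, \widetilde{\B})$; moreover $\A$ is deconstructible, and if $\mathcal{S}$ is a deconstructing set for $\A$ then its disks and spheres deconstruct $\Ch(\A)$, so $(\Ch(\A), \Ch(\A)^{\perp})$ is also a complete cotorsion pair.

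The heart of the argument consists of the two identifications
\[
\Ch(\A) \cap \Ch(\Proj(R))^{\perp} = \widetilde{\A}, \qquad \Ch(\A)^{\perp} = \Ch(\Proj(R))^{\perp} \cap dg\widetilde{\B}.
\]
Recall that $\Ch(\Proj(R))^{\perp}$ is the class of pure acyclic complexes, by Bravo-Gillespie-Hovey. For the first identity, a pure acyclic complex with components in $\A \subseteq \Flat(R)$ has flat cycles, and a Benson-Goodearl-type shift argument exhibits each cycle as a flat $\A$-periodic module; hypothesis (2) together with Proposition \ref{prop.periodic} then places the cycles in $\A$. The reverse inclusion is automatic because the cycle short exact sequences of an $\widetilde{\A}$-complex end in flat modules and are therefore pure exact. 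For the second identity, $R \in \A$ forces $\Proj(R) \subseteq \A$, and combining $\widetilde{\A} \cup \Ch(\Proj(R)) \subseteq \Ch(\A)$ with the cotorsion pair $(\widetilde{\A}, dg\widetilde{\B})$ yields $\Ch(\A)^{\perp} \subseteq \Ch(\Proj(R))^{\perp} \cap dg\widetilde{\B}$. Conversely, given $X$ on the right-hand side, pure acyclicity forces acyclicity, so $X \in \widetilde{\B}$; Eklof's lemma then reduces the vanishing of $\Ext^1_{\Ch(R)}(-, X)$ on $\Ch(\A)$ to a direct Ext computation on disks $D^n(A)$ and on complexes concentrated in a single degree, which is immediate from $X_n, Z_n X \in \B$ and acyclicity of $X$.

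Since $\Ch(\Proj(R))^{\perp}$ is thick by Bravo-Gillespie-Hovey, these two complete cotorsion pairs together with that thick class constitute a Hovey triple, and $\M$ is abelian. Finally, $\M_{\mathrm{proj}}$, $\M$, and the flat model of Gillespie \cite{G} all share the same trivial class $\Ch(\Proj(R))^{\perp}$, and their cofibrant classes are nested as $\Ch(\Proj(R)) \subseteq \Ch(\A) \subseteq \Ch(\Flat(R))$; therefore the identity on $\Ch(R)$ is left Quillen between consecutive structures, and since the weak equivalences of a Hovey triple are determined by its thick class (each such triple localizes $\Ch(R)$ at the same class), each identity is in fact a Quillen equivalence. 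This delivers the triangle equivalences $\mathbf{D}(\Flat(R)) \simeq \mathbf{D}(\A) \simeq \mathbf{K}(\Proj(R))$.

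The main obstacle is the first identification above: hypothesis (2) is genuinely required to ensure that the flat $\A$-periodic cycles of a pure acyclic complex with components in $\A$ remain inside $\A$. The second identification, although more technical, reduces cleanly to Eklof's lemma via the standard deconstruction of $\Ch(\A)$ by disks and spheres.
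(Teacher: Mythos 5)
Your overall skeleton is the paper's: the same two complete hereditary cotorsion pairs $(\Ch(\A),\Ch(\A)^{\perp})$ and $(\widetilde{\A},dg\widetilde{\B})$, the thick class $\Ch(\Proj(R))^{\perp}$, and the identification $\Ch(\A)\cap\Ch(\Proj(R))^{\perp}=\widetilde{\A}$ obtained from hypothesis (2) via Proposition~\ref{prop.periodic}. But your proof of the inclusion $\Ch(\Proj(R))^{\perp}\cap dg\widetilde{\B}\subseteq\Ch(\A)^{\perp}$ has a genuine gap: it rests on the claim that $\Ch(\A)$ is filtered by disks and spheres over a deconstructing set of $\A$, and this is false. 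The complex $\cdots\to\bbZ\xrightarrow{2}\bbZ\xrightarrow{2}\bbZ\to\cdots$ lies in $\Ch(\Proj(\bbZ))$ yet has no nonzero subcomplex isomorphic to a disk or a sphere, so it admits no such filtration. Worse, the conclusion your Eklof computation would deliver --- vanishing of $\Ext^1_{\Ch(R)}(-,X)$ on $\Ch(\A)$ for every $X$ that is merely acyclic with cycles in $\B$ --- is false already for $\A=\Flat(R)$: it would give $\widetilde{\Cot}(R)\subseteq\Ch(\Flat(R))^{\perp}$, i.e.\ that every complex of flat modules is dg-flat, which fails whenever some acyclic complex of flats has a non-flat cycle (e.g.\ $\cdots\xrightarrow{x}R\xrightarrow{x}R\xrightarrow{x}\cdots$ over $R=k[x]/(x^2)$). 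So the hypothesis $X\in\Ch(\Proj(R))^{\perp}$ must be exploited for far more than acyclicity, and your argument never uses it again after extracting $X\in\widetilde{\B}$. The paper sidesteps this entirely by invoking \cite[Lemma 4.3(1)]{G}, which requires only the trivial inclusion $\Ch(\A)^{\perp}\subseteq\Ch(\Proj(R))^{\perp}$ together with identity (i); the hard intersection formula is then an output of that lemma, not an input you must verify.

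A second, related problem is your description of $\Ch(\Proj(R))^{\perp}$ as the class of pure acyclic complexes: a bounded acyclic complex $0\to K\to P\to N\to 0$ with $P$ projective and $N$ non-flat lies in $\Ch(\Proj(R))^{\perp}$ but is not pure acyclic, so the Bravo--Gillespie--Hovey reference does not give what you use. What identity (i) actually needs is $\Ch(\Flat(R))\cap\Ch(\Proj(R))^{\perp}=\widetilde{\Flat}(R)$, which is Neeman's Theorem 8.6 \cite[Theorem 8.6]{Nee} and is exactly the deep external input the paper cites; note that even your ``automatic'' reverse inclusion $\widetilde{\A}\subseteq\Ch(\Proj(R))^{\perp}$ is one direction of that theorem rather than a formality. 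With these two repairs --- quote Neeman correctly for (i) and replace your direct proof of the second identity by Gillespie's criterion --- your argument collapses onto the paper's proof; the concluding chain of Quillen equivalences via the common trivial class is fine as you state it.
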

\begin{proof}
Let $\mathcal M=(\mathrm{Ch}(\A),\W,dg\widetilde{\B})$ be the model associated to the complete hereditary cotorsion pairs $(\mathrm{Ch}(\A), \mathrm{Ch}(\A)^\perp)$ and $(\tilclass{\A},dg\tilclass{B})$ in $\Ch(R)$. To get the claim it suffices to show that $\W=\Ch(\Proj(R))^\perp$. To this aim we will use \cite[Lemma 4.3(1)]{G}, i.e. we need to prove:
\begin{enumerate}
\item[(i) ] $\tilclass{\A}=\Ch(\A)\cap \Ch(\Proj(R))^\perp$.
\item[(ii) ] $ \mathrm{Ch}(\A)^\perp\subseteq \mathrm{Ch}(\Proj(R))^\perp$.
\end{enumerate}
Condition (ii) is clear because $\Proj(R)\subseteq \A$. Now, by Neeman \cite[Theorem 8.6]{Nee}, $\Ch(\A)\cap \Ch(\Proj(R))^\perp=\tilclass{\Flat}(R)\cap \Ch(\A)$. But, by the assumption (2), we follow that  $\tilclass{\Flat}(R)\cap \Ch(\A)=\tilclass{\A}$.
\end{proof}
\begin{remark}
Starting with a class $\A$ in the assumptions of Theorem \ref{t.mc.affine case}, we may construct, for each integer $n\geq 0$, the class $\mathcal{A}^{\leq n}$ of modules  $M$ possessing an exact sequence $$0\to A_n\to A_{n-1}\to \ldots \to A_0\to M\to  0$$ with $A_i\in \A$, $i=1,\ldots, n$. The derived categories $\mathbf{D}(\mathcal A^{\leq n})$ and $\mathbf{D}(\mathcal A)$ are triangulated equivalent (see Positselski \cite[Proposition A.5.6]{P}). In particular we can infer from this a triangulated equivalence between ${\bf K}(\Proj(R))$ and ${\bf D}(\VF(R))$. By using a standard argument of totalization one can also check that $\mathbf{D}(\mathcal A^{\leq n})$ and $\mathbf{D}(\mathcal A)$ can be realized as the homotopy categories of two models $\M_1$ and $\M_2$ and that these models are Quillen equivalent without using Neeman \cite[Theorem 8.6]{Nee}. From this point of view it seems that the triangulated equivalence between  ${\bf K}(\Proj(R))$ and ${\bf D}(\VF(R))$ is much less involved than the one between ${\bf K}(\Proj(R))$ and ${\bf D}(\Flat(R))$.
\end{remark}

\section{Quillen equivalent models for ${\bf D}(\Flat(X))$}\label{section.q_equivalent}

\noindent
{\bf Setup:} Throughout this section $X$ will denote a quasi-compact and semi-separated scheme. If $\U=\{U_0,\ldots,U_m\}$ is an affine open cover of $X$ and $\alpha=\{i_0,\ldots, i_k\}$ is a finite sequence of indices in the set $\{0,\ldots,m\}$ (with $i_0<\cdots<i_k$), we write $U_{\alpha}=U_{i_0}\cap \cdots \cap U_{i_k}$ for the corresponding affine intersection.
 
\medskip 
In \cite{Mur} Murfet shows that the derived category of flat quasi-coherent sheaves on $X$, ${\bf D}(\Flat(X))$, constitutes a good replacement of the homotopy category of projectives for non-affine schemes, because in case $X=\Spec(R)$ is affine, the categories ${\bf D}(\Flat(X))$ and ${\bf K}(\Proj(R))$ are triangulated equivalent.  There is a model for ${\bf D}(\Flat(X))$ in $\Ch(\Qcoh(X))$ given by the triple $$\M_{\rm flat}=(\Ch(\Flat(X),\W, dg(\Cot(X))).$$(see \cite[Corollary 4.1]{G}). We devote this section to provide a general method to produce model categories $\M$ in $\Ch(\Qcoh(X))$ which are Quillen equivalent to $\M_{\rm flat}$. In particular this implies that the homotopy category $\Ho(\M)$ and ${\bf D}(\Flat(X))$ are triangulated equivalent.

\begin{theorem}\label{t.mc.general case}
Let $X$ be a scheme and let $\mathcal P$ be a property of modules and $\A$ its associated class of modules. Assume that $\A\subseteq \Flat$, and that the following conditions hold:
\begin{enumerate}
\item The class $\A$ is Zariski-local.
\item For each $R=\OO_X(U)$, $U\in \U$, the pair $(\A_R,\B_R)$ is a hereditary cotorsion pair generated by a set.
\item For each $R=\OO_X(U)$, $U\in \U$, every flat $\A_R$-periodic module is trivial. 
\item $j_*(\A_{{\rm qc}(U_{\alpha})})\subseteq \A_{{\rm qc}(X)}$, for each $\alpha\subseteq \{0,\ldots,m\}$.
\end{enumerate}
Then there is an abelian model category structure  $ \M_{\A_{{\rm qc}}}$ in $\Ch(\Qcoh(X))$ given by the triple $(\mathrm{Ch}(\A_{{\rm qc}}),\W,dg(\B))$.  If we denote by $\mathbf D\mathbb(\A_{{\rm qc}})$ the homotopy category
of $\M_{\A_{{\rm qc}}}$, then the categories   ${\bf D}(\mathrm{Flat}(X))$ and $\mathbf D\mathbb(\A_{{\rm qc}})$ are triangulated equivalent, induced by a Quillen equivalence between the corresponding model categories. In case $X=\Spec(R)$ is affine, $\mathbf D\mathbb(\A_{R})$ is triangulated equivalent to $\mathbf{K}(\Proj(R))$.
\end{theorem}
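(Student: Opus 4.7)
The plan is to mimic the affine proof of Theorem \ref{t.mc.affine case}, with two globalization steps added: lifting the affine cotorsion pair of (2) to a set-generated hereditary cotorsion pair in $\Qcoh(X)$, and promoting the affine periodicity of (3) to the scheme via Zariski locality (1). First, to produce a complete hereditary cotorsion pair $(\A_{\rm qc},\B_{\rm qc})$ in $\Qcoh(X)$ generated by a set, I would take local generating sets $\mathcal S_\alpha$ of $(\A_{R_\alpha},\B_{R_\alpha})$ from (2) and form
\[ \clS \;:=\; \bigcup_\alpha \bigl\{j_{\alpha*}\widetilde M \,:\, M\in\mathcal S_\alpha\bigr\}, \]
which sits in $\A_{\rm qc}$ by condition (4). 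Following the standard globalization machinery in the spirit of \cite[Corollary 3.15]{EGPT}, using the exactness of each $j_{\alpha*}$ (by semi-separatedness) together with the Zariski locality of $\A$ and of its associated $\B$ from (1), one checks that $\clS$---augmented by a generator of $\Qcoh(X)$ (which belongs to $\A_{\rm qc}$ since $\Proj(R_\alpha)\subseteq\A_{R_\alpha}$ for each $\alpha$)---generates the desired cotorsion pair.

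Next, applying Gillespie's induction to $(\A_{\rm qc},\B_{\rm qc})$ produces two complete hereditary cotorsion pairs
\[ \bigl(\Ch(\A_{\rm qc}),\Ch(\A_{\rm qc})^\perp\bigr) \quad\text{and}\quad \bigl(\tilclass{\A_{\rm qc}},\dgclass{\B_{\rm qc}}\bigr) \]
in $\Ch(\Qcoh(X))$. Setting $\W:=\Ch(\A_{\rm qc})^\perp$, the Hovey triple $(\Ch(\A_{\rm qc}),\W,dg(\B_{\rm qc}))$ will follow from the scheme-theoretic analog of \cite[Lemma 4.3(1)]{G}, provided one verifies the key identity
\[ \tilclass{\A_{\rm qc}} \;=\; \Ch(\A_{\rm qc})\cap\tilclass{\Flat(X)}. \]
The inclusion $\subseteq$ is trivial. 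For $\supseteq$, a complex $F_\bullet\in\Ch(\A_{\rm qc})$ that is acyclic with flat cycles restricts over each $U_\alpha$ (using that $j^*$ is exact and preserves flatness) to an acyclic complex of $\A_{R_\alpha}$-modules with flat cycles; by (3) and Proposition \ref{prop.periodic}, its cycles over $U_\alpha$ lie in $\A_{R_\alpha}$, and Zariski locality (1) then places the global cycles in $\A_{\rm qc}$. I expect this globalization of periodicity to be the main technical obstacle, since it is precisely the point where conditions (1) and (3) combine.

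Finally, for the Quillen equivalence with $\M_{\rm flat}$: the inclusion $\Ch(\A_{\rm qc})\subseteq\Ch(\Flat(X))$ together with the identity established in the previous step shows that the identity functor on $\Ch(\Qcoh(X))$ sends (trivial) cofibrations of $\M_{\A_{\rm qc}}$ to (trivial) cofibrations of $\M_{\rm flat}$, hence is left Quillen. To upgrade it to a Quillen equivalence I would invoke the standard criterion: any cofibrant $X_\bullet\in\Ch(\A_{\rm qc})$ and any fibrant $Y_\bullet\in dg(\Cot(X))$ are simultaneously cofibrant and fibrant in both models, and a morphism $X_\bullet\to Y_\bullet$ between such objects is a weak equivalence in either model iff it is a chain-homotopy equivalence---a model-independent notion. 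The affine specialization $X=\Spec(R)$ then recovers Theorem \ref{t.mc.affine case}, which in turn yields $\mathbf{D}(\A_R)\simeq\mathbf{K}(\Proj(R))$.
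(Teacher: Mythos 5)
Your outline gets the first half right and correctly isolates the identity $\tilclass{\A}_{\rm qc}=\Ch(\A_{\rm qc})\cap\tilclass{\Flat}(X)$, which is indeed proved exactly as you say (conditions (1) and (3) plus Proposition \ref{prop.periodic}). But you have misjudged where the difficulty lies: that identity is the \emph{easy} half. The genuinely hard ingredient, which your proposal never supplies, is the non-affine substitute for Neeman's theorem, namely the inclusion $\Ch(\A_{\rm qc})^{\perp}\cap\Ch(\Flat(X))\subseteq\tilclass{\Flat}(X)$. Without it you cannot show that the trivial objects of $\M_{\A_{\rm qc}}$ and of $\M_{\rm flat}$ agree on complexes of flats, and hence you cannot compare the two homotopy categories at all. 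In the affine case this inclusion is exactly \cite[Theorem 8.6]{Nee}; over a scheme there are no projectives and the paper has to prove it by hand: Lemma \ref{l.keylemma} constructs, by a \v{C}ech-style induction over the semi-separating cover (this is precisely where conditions (2) and (4) and the affine Neeman theorem on each $U_\alpha$ enter), a special $\Ch(\A_{\rm qc})$-approximation $0\to\mathscr K_\bullet\to\mathscr F_\bullet\to\mathscr M_\bullet\to 0$ of any complex of flats with $\mathscr K_\bullet\in\tilclass{\Flat}(X)$, and a pull-back argument then forces any $\mathscr L_\bullet\in\Ch(\A_{\rm qc})^{\perp}\cap\Ch(\Flat(X))$ into $\tilclass{\Flat}(X)$. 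The paper then works inside the exact category $\Ch(\Flat(X))$ with thick class $\tilclass{\Flat}(X)$ and passes back through a chain of cofibrant restrictions; it does not compare $\M_{\A_{\rm qc}}$ and $\M_{\rm flat}$ directly in $\Ch(\Qcoh(X))$.

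Two further concrete problems. First, your choice $\W:=\Ch(\A_{\rm qc})^{\perp}$ cannot serve as the class of trivial objects: a Hovey triple requires $(\Ch(\A_{\rm qc})\cap\W,\,dg(\B))$ to be the cotorsion pair $(\tilclass{\A}_{\rm qc},dg(\B))$, but $\Ch(\A_{\rm qc})^{\perp}$ need not contain $\tilclass{\A}_{\rm qc}$ (even affinely the thick class is $\Ch(\Proj(R))^{\perp}$, which is strictly larger than $\Ch(\A)^{\perp}$, and non-affinely there is no analogue). Second, your Quillen-equivalence criterion is misapplied: a cofibrant $X_\bullet\in\Ch(\A_{\rm qc})$ need not be fibrant, and a fibrant $Y_\bullet\in dg(\Cot(X))$ need not be cofibrant, so ``weak equivalence iff chain-homotopy equivalence'' is false for maps between them --- for instance, $X_\bullet\to 0$ with $X_\bullet$ a non-contractible pure-acyclic complex of flats is a weak equivalence in $\M_{\rm flat}$ but not a chain-homotopy equivalence. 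Repairing either point leads you back to the missing inclusion above.
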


Before proving the theorem, let us focus on one particular instance of it: if we take $\A=\VF$ (the class of very flat modules) the theorem gives us that ${\bf D}(\mathrm{Flat}(X))$ and $\mathbf D\mathbb(\VF(X))$ are triangulated equivalent. This generalizes to arbitrary schemes \cite[Corollary 5.4.3]{P}, where such a triangulated equivalence is obtained for a semi-separated Noetherian scheme of finite Krull dimension.
\begin{cor}\label{cor.triang.equiv.flatveryflat}
For any scheme $X$, the categories ${\bf D}(\mathrm{Flat}(X))$ and $\mathbf D\mathbb(\VF(X))$ are triangulated equivalent.
\end{cor}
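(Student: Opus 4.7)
The plan is to observe that Corollary \ref{cor.triang.equiv.flatveryflat} is a direct instance of Theorem \ref{t.mc.general case} applied to the class $\A = \VF$ of very flat modules, so the entire task reduces to verifying the four hypotheses (1)--(4) of that theorem. Each of these has already been established in the preceding sections, so the proof is essentially a bookkeeping exercise citing the appropriate facts.

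First, for condition (1), I would invoke the Proposition of Section \ref{section.veryflat} attributed to Positselski, which asserts that the class $\VF$ of very flat modules is Zariski-local (the two needed criteria of Lemma \ref{ZL} are verified there via \cite[Lemma 1.2.2(b)]{P} and \cite[Lemma 1.2.6(a)]{P}). For condition (2), the pair $(\VF(R), \CA(R))$ is by definition the cotorsion pair generated by the set $\mathcal{S} = \{R[r^{-1}] : r \in R\}$, and it is hereditary because every very flat module has projective dimension at most $1$, as recalled in Section \ref{section.veryflat}.

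For condition (3), I would cite the Proposition in Section \ref{section.examples}, which (via the horseshoe argument and Proposition \ref{bg}) shows that every flat $\VF(R)$-periodic module is very flat, hence trivial in the relevant model-categorical sense. Finally, for condition (4), one uses the fact that for an intersection $U_\alpha = U_{i_0} \cap \cdots \cap U_{i_k}$ of members of the semi-separated affine cover, the inclusion $j \colon U_\alpha \hookrightarrow X$ is an affine morphism, and that $j_*$ preserves very flatness by \cite[Corollary 1.2.5(b)]{P} (as explicitly noted before Proposition \ref{prop.vf.generators}). Since very flatness of a quasi-coherent sheaf is tested on affine opens, this gives $j_*(\VF_{{\rm qc}(U_\alpha)}) \subseteq \VF_{{\rm qc}(X)}$.

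Having checked all four hypotheses, Theorem \ref{t.mc.general case} immediately produces an abelian model structure on $\Ch(\Qcoh(X))$ whose homotopy category $\mathbf{D}(\VF(X))$ is triangulated equivalent to $\mathbf{D}(\Flat(X))$, the equivalence being induced by a Quillen equivalence of the two models. Since every hypothesis has already been verified elsewhere in the paper, there is no genuine obstacle here; the main content of the corollary lies entirely in Theorem \ref{t.mc.general case} itself, and the only thing to be careful about is making sure that the affine-level verification of (3) (a statement about modules) automatically gives what is needed at the sheaf level through the Zariski-locality of condition (1).
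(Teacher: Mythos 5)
Your proposal is correct and follows exactly the route the paper takes: the paper derives the corollary by specializing Theorem \ref{t.mc.general case} to $\A=\VF$, with hypotheses (1)--(4) supplied by the Positselski Zariski-locality proposition, the definition of the very flat cotorsion pair, the \v{S}\v{t}ov\'{\i}\v{c}ek periodicity proposition, and \cite[Corollary 1.2.5(b)]{P} respectively. Your write-up merely makes the bookkeeping of those four citations explicit, which the paper leaves implicit.
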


Let us prove Theorem \ref{t.mc.general case}. We firstly require the following useful lemma.

\begin{lemma}\label{l.keylemma}
Suppose $\A$ is as in Theorem \ref{t.mc.general case} (possibly without satisfying condition (3)). Then for any $\mathscr M_\bullet\in \Ch(\Flat(X))$ there exists a short exact sequence
$$0\to \mathscr K
_\bullet\to \mathscr F_\bullet\to \mathscr M_\bullet\to 0,$$
where $\mathscr F_\bullet\in \Ch(\A_{{\rm qc}(X)})$ and $\mathscr K_\bullet\in\tilclass\Flat(X)$. 
\end{lemma}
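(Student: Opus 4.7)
The plan is to first handle the affine case, where the conclusion follows from Neeman's theorem applied to a cotorsion pair lifted from modules to chain complexes, and then to globalize by induction on the size $m+1$ of the semi-separating affine cover $\U = \{U_0,\ldots,U_m\}$. The pivotal hypothesis will be condition (4): it guarantees that pushforwards $j_{\alpha *}$ of local $\A$-covers stay inside $\Ch(\A_{{\rm qc}(X)})$, while exactness of $j_{\alpha *}$ along affine open immersions (together with the fact that such pushforwards preserve flatness of quasi-coherent sheaves) will keep local kernels inside $\tilclass{\Flat}(X)$.

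For the affine base case $X = \Spec R$, condition (2) together with Gillespie's lifting (as used in the proof of Theorem \ref{t.mc.affine case}) gives a complete hereditary cotorsion pair $(\Ch(\A_R), \Ch(\A_R)^\perp)$ in $\Ch(R)$. Applying its enough-projectives property to $\mathscr M_\bullet$ produces a short exact sequence $0 \to \mathscr K_\bullet \to \mathscr F_\bullet \to \mathscr M_\bullet \to 0$ with $\mathscr F_\bullet \in \Ch(\A_R)$ and $\mathscr K_\bullet \in \Ch(\A_R)^\perp$. Since $\A_R \subseteq \Flat(R)$ and $\mathscr M_\bullet$ is componentwise flat, so is $\mathscr K_\bullet$. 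From $\Proj(R) \subseteq \A_R$ we get $\Ch(\A_R)^\perp \subseteq \Ch(\Proj(R))^\perp$, and Neeman's theorem \cite[Theorem 8.6]{Nee} then yields $\mathscr K_\bullet \in \Ch(\Flat(R)) \cap \Ch(\Proj(R))^\perp = \tilclass{\Flat}(R)$, which finishes the base case.

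For the inductive step, I would write $X = V \cup W$ with $V = U_0$ and $W = U_1 \cup \cdots \cup U_m$; then $V \cap W$ inherits the semi-separating affine cover $\{U_0 \cap U_i : i = 1,\ldots,m\}$ of size $m$, and condition~(4) on $X$ restricts to the analogous hypothesis on $W$ and on $V \cap W$, so the lemma applies inductively to each of $V$, $W$, $V \cap W$. I would then glue the three resulting local short exact sequences (pushed forward to $X$ via $j_{V*}$, $j_{W*}$, $j_{V\cap W*}$) along the Mayer--Vietoris short exact sequence
\[
0 \to \mathscr M_\bullet \to j_{V*}j_V^* \mathscr M_\bullet \oplus j_{W*}j_W^* \mathscr M_\bullet \to j_{V\cap W*}j_{V\cap W}^* \mathscr M_\bullet \to 0.
\]
The step I expect to be the main obstacle is arranging the three local covers coherently so that their Mayer--Vietoris splice produces a middle term in $\Ch(\A_{{\rm qc}(X)})$ (which is immediate from condition~(4)) and a kernel that is simultaneously componentwise flat and in $\tilclass{\Flat}(X)$; for the latter I would combine closure of $\tilclass{\Flat}(X)$ under extensions with the observation that each $j_*$ along an affine open immersion preserves $\tilclass{\Flat}$ (by exactness of $j_*$ and preservation of flat quasi-coherent sheaves along such immersions).
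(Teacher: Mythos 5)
Your affine base case is exactly the paper's local step (special $\Ch(\A_R)$-precover, then Neeman's $\Ch(\Flat(R))\cap\Ch(\Proj(R))^{\perp}=\tilclass{\Flat}(R)$ to identify the kernel), so that part is fine. The global step, however, has a genuine gap, and it sits precisely at the point you flag as ``immediate from condition (4)''. If you pull back $\pi_V\oplus\pi_W\colon \mathscr F_V\oplus\mathscr F_W\twoheadrightarrow j_{V*}j_V^*\mathscr M_\bullet\oplus j_{W*}j_W^*\mathscr M_\bullet$ along the Mayer--Vietoris monomorphism, the resulting middle term $\mathscr P_\bullet$ surjects onto $\mathscr M_\bullet$ with kernel $\ker\pi_V\oplus\ker\pi_W$ (good for the $\tilclass{\Flat}(X)$ half), but $\mathscr P_\bullet$ itself is the kernel of $\mathscr F_V\oplus\mathscr F_W\twoheadrightarrow j_{V\cap W*}j_{V\cap W}^*\mathscr M_\bullet$, whose target is only a complex of flats. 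Condition (4) says nothing about such kernels, and the resolving property of $\A$ (from heredity of the cotorsion pair) only closes $\A$ under kernels of epimorphisms \emph{between objects of $\A$}; so there is no reason for $\mathscr P_\bullet$ to lie in $\Ch(\A_{{\rm qc}(X)})$. Arranging the local covers ``coherently'' so that this works is not a technical afterthought --- it is the entire content of the lemma. A second problem: your decomposition uses $j_{W*}$ for $W=U_1\cup\cdots\cup U_m$, which is not an affine open immersion in general, so $j_{W*}$ need not be exact, need not preserve flatness, and is not covered by condition (4) (which is stated only for the affine intersections $U_\alpha$); the exactness of the two-set Mayer--Vietoris sequence on the right is likewise not automatic in this generality.

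The paper avoids both issues by not decomposing $X$ at all: it builds the epimorphism by successive approximation, one affine chart $U\in\U$ at a time. Given an epimorphism $\shcplx G\twoheadrightarrow\shcplx M$ already correct over an open $T$, it takes a special $\Ch(\A)$-precover of $j^*(\shcplx G)$ over the affine $U$ (where Neeman's theorem identifies the kernel as a $\tilclass{\Flat}$ complex), pushes it forward along the affine immersion $j$ (where condition (4) and exactness of $j_*$ do apply), and pulls back along the adjunction unit $\shcplx G\to j_*j^*(\shcplx G)$. Membership of the new middle term in $\Ch(\A_{{\rm qc}(U\cup T)})$ is then checked separately on $U$ (where it is the chosen cover) and on $T$ (where it is an extension of $\shcplx G$ by $j_*$ of a complex that is in $\A$ over $U\cap T$ by Zariski-locality and the resolving property, pushed forward along the affine morphism $U\cap T\hookrightarrow T$). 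If you want to salvage your outline, you would have to replace the Mayer--Vietoris splice by some such correction of $\mathscr P_\bullet$ over each chart --- at which point you have reproduced the paper's argument.
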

\begin{proof}
We essentially follow the proof of \cite[Lemma 4.1.1]{P1}; the main difference is that instead of sheaves, we are dealing with complexes of sheaves. Starting with the empty set, we gradually construct such a short exact sequence with the desired properties manifesting on larger and larger unions of sets from $\U$, reaching $X$ in a finite number of steps.

Assume that for an open subscheme $T$ of $X$ we have constructed a short exact sequence $0 \to \shcplx L \to \shcplx G \to \shcplx M \to 0$ such that the restriction $h^*(\shcplx G)$ belongs to $\Ch(\A_{{\rm qc}(T)})$ ($h\colon T \hookrightarrow X$ being the inclusion map) and $\shcplx L \in \tilclass\Flat(X)$. Let $U \in \U$ (with inclusion map $j\colon U \hookrightarrow X$); our goal is to construct a short exact sequence $0 \to \shcplx L' \to \shcplx G' \to \shcplx M \to 0$ with the same property with respect to the set $U \cup T$. Let us note that the adjoint pairs of functors on sheaves $(j^*, j_*)$, $(h^*, h_*)$ yield corresponding adjoint pairs of functors on complexes of sheaves.

Pick a short exact sequence $0 \to \shcplx K' \to \shcplx Z \to j^*(\shcplx G) \to 0$ of complexes of sheaves over the affine subscheme $U$, where $\shcplx Z \in \Ch(\A_{{\rm qc}(U)}) = \Ch(\A_{\OO_U(U)})$ and $\shcplx K' \in \Ch(\A_{\OO_U(U)})^{\perp}$, i.e.\ special precover in the category of complexes of $\OO_U(U)$-modules. In this (affine) setting we know from \cite{Nee} that $\shcplx K' \in \tilclass\Flat(U)$, since $\shcplx K' \in \Ch(\Flat(U)) \cap \Ch(\A_{\OO_U(U)})^{\perp} \subseteq \Ch(\Flat(U)) \cap \Ch(\Proj(U))^{\perp}$. Using the direct image functor, we get $0 \to j_*(\shcplx K') \to j_*(\shcplx Z') \to j_*j^*(\shcplx G) \to 0$ over $X$. Since $U \in \U$ is affine, $j_*$ is an exact functor taking flats to flats and also preserving $\A$ by condition (4), so $j_*(\shcplx K') \in \tilclass\Flat(X)$, whence $j_*(\shcplx Z)$ stays in $\Ch(\A_{{\rm qc}})$. Now considering the pull-back with respect to the adjunction morphism $\shcplx G \to j_*j^*(\shcplx G)$, one gets a new short exact sequence ending in $\shcplx G$; let $\shcplx G'$ be its middle term:
\[\xymatrix{
0 \ar[r] & j_*(\shcplx K') \ar[r]\ar@{=}[d] & \shcplx G' \ar[r]\ar[d] & \shcplx G \ar[r]\ar[d] & 0 \\
0 \ar[r] & j_*(\shcplx K') \ar[r] & j_*(\shcplx Z') \ar[r] & j_*j^*(\shcplx G) \ar[r] & 0
}\] 

Let us now check that the components of $\shcplx G'$ are in $\Ch(\A_{{\rm qc}(U \cup T)})$; this is sufficient to check on $U$ and $T$ separately. Firstly, $j^*(\shcplx G') \cong \shcplx Z$, which is in $\Ch(\A_{{\rm qc}(U)})$ by construction. On the other hand, the complex $j^*(\shcplx G)$, when further restricted to $U \cap T$, is in $\Ch(\A_{{\rm qc}(U \cup T)})$ ($\A$ being Zariski-local class), and the same holds for the complex $\shcplx K'$ by the resolving property of $\A$. The embedding $U \cap T \hookrightarrow T$ is an affine morphism (by semi-separatedness) and preserving $\A$ by (4), so $j_*(\shcplx K') \in \Ch(A_{{\rm qc}(T)})$. Therefore $\shcplx G'$, as an extension of $j_*(\shcplx K')$ by $\shcplx G$, belongs to $\Ch(A_{{\rm qc}(T)})$, too.

Finally, the kernel $\shcplx K$ of the composition of morphisms $\shcplx G' \to \shcplx G \to \shcplx M$ is an extension of $\shcplx L$ and $j_*(\shcplx K')$, hence a complex from $\tilclass\Flat(X)$. This proves the existence of the short exact sequence from the statement.
\end{proof}

\begin{proof}[Proof of Theorem \ref{t.mc.general case}]
First of all we notice that the class $\A_{{\rm qc}}$ contains a family of generators for $\Qcoh(X)$; this is just a variation of the idea used in the proof of \cite[Lemma 4.1.1]{P}, where we replace the class of very flats by $\A$ (and do not care about the kernel of the morphisms), which is possible thanks to property (4).

Then, by \cite[Corollary 3.15]{EGPT} we get in $\Qcoh(X)$ the complete hereditary cotorsion pair $(\A_{{\rm qc}},\B)$ generated by a set. Thus by \cite[Theorem 4.10]{G} we get the abelian model structure $\M_{\A_{{\rm qc}}}^{\rm qc}=(\Ch(\A_{{\rm qc}}),{\W_1}, dg(\B))$ in $\Ch(\Qcoh(X)) $ given by the two complete hereditary cotorsion pairs:
$$(\Ch(\A_{{\rm qc}}),\Ch(\A_{{\rm qc}})^{\perp})\quad \text{and}\quad (\tilclass{\A}_{{\rm qc}}, dg(\B)).$$
Since $\A_{{\rm qc}}\subseteq \Flat(X) $, we get the corresponding induced cotorsion pairs in $\Ch(\Flat(X))$ (with the induced exact structure from $\Flat(X)$):
$$(\Ch(\A_{{\rm qc}}),\Ch(\A_{{\rm qc}})^{\perp}\cap \Ch(\Flat(X)))\quad \text{and}\quad (\tilclass{\A}_{{\rm qc}}, dg(\B)\cap \Ch(\Flat(X))).$$
To see that e.g.\ the former one is indeed a cotorsion pair, we have to check that $\Ch(\A_{{\rm qc}}) = {}^\perp(\Ch(\A_{{\rm qc}})^{\perp}\cap \Ch(\Flat(X))) \cap \Ch(\Flat(X))$. The inclusion ``$\subseteq$'' is clear. To see the other one, pick $\mathscr X_\bullet \in {}^\perp(\Ch(\A_{{\rm qc}})^{\perp}\cap \Ch(\Flat(X))) \cap \Ch(\Flat(X))$ and consider a short exact sequence $0 \to \mathscr B_\bullet \to \mathscr A_\bullet \to \mathscr X_\bullet \to 0$ with $\mathscr A_\bullet \in \Ch(\A_{{\rm qc}})$ and $\mathscr B_\bullet \in \Ch(\A_{{\rm qc}})^{\perp}$. As $\A_{{\rm qc}} \subseteq \Flat(X)$ and $\Ch(\Flat(X))$ is a resolving class, we infer that $\mathscr B_\bullet \in \Ch(\Flat(X))$. Thus the sequence splits and $\mathscr X_\bullet$ is a direct summand of $\mathscr A_\bullet$, hence an element of $\Ch(\A_{{\rm qc}})$. The proof for the latter cotorsion pair goes in a similar way.

Now we will apply \cite[Lemma 4.3]{G} to these two complete cotorsion pairs in the category $\Ch(\Flat(X))$ and to the thick class $\W=\tilclass{\Flat}(X)$ in $\Ch(\Flat(X))$. So we need to check that the following conditions hold:
\begin{enumerate}
\item[(i)] $\tilclass{\A}_{{\rm qc}}=\Ch(\A_{{\rm qc}})\cap \tilclass{\Flat}(X)$.
\item[(ii)] $\Ch(\A_{{\rm qc}})^{\perp}\cap \Ch(\Flat(X))\subseteq \tilclass{\Flat}(X)$.
\end{enumerate}
Since every flat $\A_R$-periodic module is trivial and the classes $\A$ and $\Flat$ are Zariski-local, we immediately infer that every flat $\A_{{\rm qc}}$-periodic quasi-coherent sheaf is trivial. Thus, from Proposition \ref{prop.periodic}, we get condition (i). So let us see condition (ii). Let $\mathscr L_\bullet\in\Ch(\A_{{\rm qc}})^{\perp}\cap \Ch(\Flat(X)) $. Since the pair $(\Ch(\Flat(X)),\Ch(\Flat(X))^\perp)$ in $\Ch(\Qcoh(X))$ has enough injectives, there exists an exact sequence, $$0\to \mathscr L_\bullet\to \mathscr P_\bullet\to \mathscr M_\bullet\to 0, $$ with $\mathscr P_\bullet\in \Ch(\Flat(X))^\perp$ and $\mathscr M_\bullet\in \Ch(\Flat(X))$. Now, since $\mathscr L_\bullet\in\Ch(\Flat(X))$, we get that $\mathscr P_\bullet\in \Ch(\Flat(X))\cap \Ch(\Flat(X))^\perp=\tilclass{\Flat\Cot}(X)$. By Lemma \ref{l.keylemma}, there exists an exact sequence $$0\to \mathscr K
_\bullet\to \mathscr F_\bullet\to \mathscr M_\bullet\to 0,$$ where $\mathscr F_\bullet\in \Ch(\A_{{\rm qc}})$ and $\mathscr K_\bullet\in\tilclass\Flat(X)$. Now, we take the pull-back of $\mathscr P_\bullet\to \mathscr M_\bullet$ and $\mathscr F_\bullet \to \mathscr M_\bullet$, so we get a commutative diagram:
 \begin{equation*}
    \xymatrix{
   & & 0\ar[d] & 0\ar[d] & & \\
      &  & \mathscr K_\bullet \ar[d] \ar@{=}[r]  &\mathscr K_\bullet \ar[d]\\
      0\ar[r] & \mathscr L_\bullet\ar@{=}[d]\ar[r] & \mathscr Q_\bullet \ar[r] \ar[d]\ar[r] &\mathscr F_\bullet \ar[r]\ar[d] &0\\
      0\ar[r] & \mathscr L_\bullet\ar[r] & \mathscr P_\bullet \ar[r]\ar[d]  &\mathscr M_\bullet\ar[r]\ar[d] &0 \\
              &               &   0            &           0    }
  \end{equation*}
In the middle column, the complexes $\mathscr K_\bullet$ and $\mathscr P_\bullet$ belong to $\tilclass{\Flat}(X)$. Therefore, the complex $\mathscr Q_\bullet$ also belongs to $\tilclass{\Flat}(X)$. Since $\mathscr F_\bullet\in \Ch(\A_{{\rm qc}})$ and $\mathscr L_{\bullet}\in  \Ch(\A_{{\rm qc}})^{\perp}$, the exact sequence in the middle row splits. So, $\mathscr L_\bullet \in \tilclass{\Flat}(X)$ as desired.

Therefore by \cite[Lemma 4.3]{G} we have the exact model structure in $\Ch(\Flat(X))$ given by the triple 

 $$\M_{\A_{{\rm qc}}}^{\Ch_{\rm flat}}=(\Ch(\A_{{\rm qc}}),\tilclass{\Flat}(X), dg(\B)\cap \Ch(\Flat(X))).$$ Since it has the same class of trivial objects, this model is Quillen equivalent to the flat model in $\Ch(\Flat(X))$, $$\M_{\rm flat}^{\Ch_{\rm flat}}=(\Ch(\Flat(X)),\tilclass{\Flat}(X),dg(\Cot(X))\cap \Ch(\Flat(X)) ).$$ 
This is, in turn, the restricted model of the model  $$\M_{\rm flat}=(\Ch(\Flat(X),\W, dg(\Cot(X)))$$ in $\Ch(\Qcoh(X))$ with respect to the exact category $\Ch(\Flat(X))$ of cofibrant objects. Thus, $\M_{\rm flat}$ and $\M_{\rm flat}^{\Ch_{\rm flat}}$ are canonically Quillen equivalent. 
To finish the proof, let us show that the model $\M_{\A_{{\rm qc}}}^{\Ch_{\rm flat}}$ is Quillen equivalent to 
$$\M_{\A_{{\rm qc}}}=(\Ch(\A_{{\rm qc}(X)}),\W_1, dg(\B)).$$ But this model is canonically Quillen equivalent to its restriction to the cofibrant objects, i.e. $$\M_{\A_{{\rm qc}}}^{\Ch_{\A_{\rm qc}}}=(\Ch(\A_{{\rm qc}}),\tilclass{\A}_{{\rm qc}}, dg(\B)\cap \Ch(\A_{{\rm qc}})).$$ Finally the Quillen equivalent cofibrant restricted model of $$\M_{\A_{{\rm qc}}}^{\Ch_{\rm flat}}=(\Ch(\A_{{\rm qc}}),\tilclass{\Flat}(X), dg(\B)\cap \Ch(\Flat(X)))$$ is given by the triple $$(\Ch(\A_{{\rm qc}}),\tilclass{\Flat}(X)\cap\Ch(\A_{{\rm qc}}), dg(\B)\cap \Ch(\A_{{\rm qc}}))), $$ which by condition (i) above is precisely the previous model $\M_{\A_{{\rm qc}}}^{\Ch_{\A_{\rm qc}}}$. In summary, we have the following chain of Quillen equivalences among the several models, $$ \M_{\rm flat}\simeq \M_{\rm flat}^{\Ch_{\rm flat}}\simeq  \M_{\A_{{\rm qc}}}^{\Ch_{\rm flat}}\simeq \M_{\A_{{\rm qc}}}^{\Ch_{\A_{\rm qc}}}\simeq \M_{\A_{{\rm qc}}}. $$ The first and the last models give our desired Quillen equivalence.
\end{proof}
Recall from \cite{D} that $\mathscr M\in \Qcoh(X)$ is an \emph{infinite-dimensional vector bundle} if, for each $U\in \mathcal U$, the $\mathscr O_X(U)$-module $\mathscr M(U)$ is projective. We will denote by $\Vect(X)$ the class of all infinite-dimensional vector bundles on $X$. In case $\Vect(X)$ contains a generating set of $\Qcoh(X)$, we know from \cite[Corollary 3.15 and 3.16]{EGPT} that the pair $(\Vect(X),\mathcal B)$ (where $\mathcal B:=\Vect(X)^{\perp}$) is a complete cotorsion pair generated by a set. It is hereditary, because the class $\Vect(X)$ is resolving. Thus by \cite[Theorem 4.10]{G} we get the abelian model structure $\M_{\textrm{vect}}=(\Ch(\Vect(X)),{\W_1}, dg(\B))$ in $\Ch(\Qcoh(X)) $ given by the two complete hereditary cotorsion pairs:
$$(\Ch(\Vect(X)),\Ch(\Vect(X))^{\perp})\quad \text{and}\quad (\tilclass{\Vect}(X), dg(\B)).$$ We will denote by ${\bf D}(\Vect(X))$ its homotopy category.

We are now in position to prove Corollary 2 in the Introduction.
\begin{cor}\label{eq.vbundles}
Let $X$ be a scheme with enough infinite-dimensional vector bundles. Then the categories ${\bf D}(\mathrm{Flat}(X))$ and $\mathbf D\mathbb(\Vect(X))$ are triangle equivalent, the equivalence being induced by a Quillen equivalence between the corresponding model categories.
\end{cor}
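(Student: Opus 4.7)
The plan is to factor the desired Quillen equivalence through the very flat model. Writing $\M_{\VF}$ for the model supplied by Theorem \ref{t.mc.general case} applied to $\A=\VF$, I aim to exhibit Quillen equivalences
\[ \M_{\rm flat}\;\simeq_Q\;\M_{\VF}\;\simeq_Q\;\M_{\rm vect}. \]
The first is precisely Corollary \ref{cor.triang.equiv.flatveryflat}: the four hypotheses of Theorem \ref{t.mc.general case} for $\A=\VF$ were all verified in Section \ref{section.veryflat} (Zariski-locality, completeness of $(\VF,\CA)$, flat $\VF$-periodic triviality, and Positselski's stability $j_*(\VF_{{\rm qc}(U_\alpha)})\subseteq\VF_{{\rm qc}(X)}$). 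The hypothesis that $X$ has enough infinite-dimensional vector bundles enters only in the second equivalence; it guarantees that $(\Vect(X),\B)$ is a complete hereditary cotorsion pair generated by a set, so that $\M_{\rm vect}$ is a legitimate abelian model structure by \cite[Theorem 4.10]{G}.

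For the second link, the strategy is to re-enact the argument of Theorem \ref{t.mc.general case} with $\Flat(X)$ replaced everywhere by $\VF(X)$, and with $\A_{\rm qc}$ taken to be $\Vect(X)$. Concretely, I would apply Gillespie's Lemma 4.3 inside the exact category $\Ch(\VF(X))$, using the two restricted cotorsion pairs
\[ \bigl(\Ch(\Vect(X)),\,\Ch(\Vect(X))^\perp\cap\Ch(\VF(X))\bigr)\ \text{ and }\ \bigl(\tilclass{\Vect}(X),\,dg(\B)\cap\Ch(\VF(X))\bigr), \]
together with the thick class $\tilclass{\VF}(X)$. The two things to check are
\begin{enumerate}
\item[(i)] $\tilclass{\Vect}(X)=\Ch(\Vect(X))\cap\tilclass{\VF}(X)$, and
\item[(ii)] $\Ch(\Vect(X))^\perp\cap\Ch(\VF(X))\subseteq\tilclass{\VF}(X)$.
\end{enumerate}
Condition (i) is a Benson--Goodearl style observation, carried out on each $U\in\U$: a short exact sequence $0\to Z\to V\to Z'\to 0$ of $\OO_X(U)$-modules with $V$ projective and $Z'$ very flat forces $Z$ projective, since $\pd Z'\le 1$; hence every very flat cycle of a complex of vector bundles is itself a vector bundle.

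Condition (ii) is the main obstacle, and I would deduce it from an analog of Lemma \ref{l.keylemma}: for every $\mathscr M_\bullet\in\Ch(\VF(X))$ there is a short exact sequence $0\to\mathscr K_\bullet\to\mathscr F_\bullet\to\mathscr M_\bullet\to 0$ with $\mathscr F_\bullet\in\Ch(\Vect(X))$ and $\mathscr K_\bullet\in\tilclass{\VF}(X)$. Granted this, the pull-back argument at the end of the proof of Theorem \ref{t.mc.general case} yields (ii) verbatim (with $\Flat$ replaced by $\VF$). To prove the analog, one mimics the inductive construction of Lemma \ref{l.keylemma} over the cover $\U$: locally on an affine $U\in\U$, one forms a special $\Ch(\Proj(\OO_X(U)))$-precover of the restriction, whose kernel lies in $\Ch(\VF(U))\cap\Ch(\Proj(U))^\perp\subseteq\tilclass{\VF}(U)$ by Neeman's theorem together with Proposition \ref{prop.periodic} applied to $\A=\VF$. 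The critical point that makes the global patching work is that $j_*(\Vect(U_\alpha))\subseteq\VF(X)$, even though $j_*(\Vect(U_\alpha))\subseteq\Vect(X)$ fails in general (e.g.\ $j_*\OO_{U_1}$ on $\PP{1}{k}$ is not a vector bundle): this weaker inclusion holds because $\Vect(U_\alpha)\subseteq\VF(U_\alpha)$ and Positselski's $j_*$-stability preserves $\VF$. This is precisely why the corollary must be routed through the very flat sheaves rather than proved directly. Concatenating the two Quillen equivalences then yields the Quillen equivalence of the statement and the induced triangle equivalence on homotopy categories.
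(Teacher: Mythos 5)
Your overall strategy (routing the equivalence through $\M_{\VF}$ via Corollary \ref{cor.triang.equiv.flatveryflat}) is the same as the paper's, and your condition (i) is correct: it is exactly the local observation the paper uses, namely that the kernel of an epimorphism from a projective onto a very flat module is projective because very flat modules have $\pd\le 1$. The genuine gap is in your proof of condition (ii). You reduce it to an analog of Lemma \ref{l.keylemma} producing, for each $\mathscr M_\bullet\in\Ch(\VF(X))$, a short exact sequence $0\to\mathscr K_\bullet\to\mathscr F_\bullet\to\mathscr M_\bullet\to 0$ with $\mathscr F_\bullet\in\Ch(\Vect(X))$ and $\mathscr K_\bullet\in\tilclass{\VF}(X)$, and you propose to prove it by the same inductive patching over $\U$. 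That patching cannot deliver $\mathscr F_\bullet\in\Ch(\Vect(X))$: in the inductive step the new middle term is an extension of $j_*(\shcplx K')$ by the previously constructed complex, and its restriction to the already-handled open set $T$ contains the direct image, along the affine inclusion $U\cap T\hookrightarrow T$, of a complex of projectives --- which is only very flat, not locally projective. This is precisely the failure $j_*(\Vect(U_\alpha))\not\subseteq\Vect(X)$ that you yourself point out (e.g.\ $j_*\OO_{U_1}$ on $\PP{1}{k}$); the weaker inclusion $j_*(\Vect(U_\alpha))\subseteq\VF(X)$ keeps the construction inside $\Ch(\VF(X))$ but never lands it in $\Ch(\Vect(X))$, so the induction does not close. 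Trying instead to obtain the surjection from completeness of $(\Ch(\Vect(X)),\Ch(\Vect(X))^\perp)$ places the kernel in $\Ch(\Vect(X))^\perp\cap\Ch(\VF(X))$, which is the very class you are trying to control --- circular.

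The paper avoids any patching lemma at this point. It compares $\M_{\VF}$ and $\M_{\mathrm{vect}}$ as two Hovey triples on $\Ch(\Qcoh(X))$ and shows they have the same trivial objects via \cite[Theorem 1.2]{G2}, so that the identity functor is a Quillen equivalence; one only has to check that trivially cofibrant and trivially fibrant objects of each structure are trivial in the other. The two nontrivial inclusions are handled by special precovers from the complete cotorsion pairs combined with the $\pd\le 1$ trick: for $\shcplx V\in\Ch(\Vect(X))^\perp\cap\Ch(\VF(X))$, a special $\Ch(\Vect(X))$-precover $0\to\shcplx Q\to\shcplx P\to\shcplx V\to 0$ has $\shcplx Q\in\Ch(\Vect(X))$ (by the local syzygy argument above) and $\shcplx P\in\Ch(\Vect(X))^\perp$ (extension closure), so both ends lie in $\Ch(\Vect(X))\cap\Ch(\Vect(X))^\perp\subseteq\tilclass{\Vect}(X)\subseteq\tilclass{\VF}(X)$ and $\shcplx V$ is trivial in both structures; a similar precover argument handles $\tilclass{\VF}(X)\subseteq\W_{\mathrm{vect}}$. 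Replacing your Lemma \ref{l.keylemma} analog by this argument repairs the proof; as written, the key step is not established.
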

\begin{proof}
The proof will follow by showing that  ${\bf D}(\mathrm{Vect}(X))$ and $\mathbf D\mathbb(\VF(X))$ are Quillen equivalent, and then by applying Corollary \ref{cor.triang.equiv.flatveryflat}. To this end, we will prove that the model structures $\M_{\mathrm{vect}}$ and $\M_{\VF}$ have the same trivial objects. To achieve this, by \cite[Theorem 1.2]{G2}, it suffices to show that the trivial fibrant and cofibrant objects of one structure are trivial also in the other structure. This assertion is clearly satisfied by the trivial cofibrants of $\M_{\mathrm{vect}}$ and trivial fibrants of $\M_{\VF}$, as
\[ \widetilde{\Vect}(X) \subseteq \widetilde{\VF}(X) \quad \text{and} \quad \Ch(\VF(X))^\perp \subseteq \Ch(\Vect(X))^\perp.\]

Now let $\shcplx V \in \widetilde{\VF}(X)$; since there are enough infinite-dimensional vector bundles, the cotorsion pair $(\tilclass{\Vect}(X), dg(\B))$ has enough projectives, hence there is a short exact sequence
\[ 0 \to \shcplx Q \to \shcplx P \to \shcplx V \to 0 \]
with $\shcplx P \in \tilclass{\Vect}(X)$. Restricting this to an open affine subset of $X$, we obtain a short exact sequence with a complex of projective modules in the middle and ending in a complex of very flat modules, and the objects of cycles also belonging to the respective classes. Since the projective dimension of very flat modules does not exceed $1$, it follows that $\shcplx Q$ has also projective cycles after this restriction, hence $\shcplx Q \in \tilclass{\Vect}(X)$. We conclude that $\shcplx V$, being a factor of two trivial objects, is itself trivial in $\M_{\mathrm{vect}}$.

Finally, pick $\shcplx M \in \Ch(\Vect(X))^\perp$. Using the completeness of the cotorsion pair $(\Ch(\VF(X)), \Ch(\VF(X))^\perp)$, we obtain a short exact sequence
\[ 0 \to \shcplx K \to \shcplx V \to \shcplx M \to 0 \]
with $\shcplx V \in \Ch(\VF(X))$ and $\shcplx K \in \Ch(\VF(X))^\perp$. As $\shcplx K$ is trivial in $\M_{\mathrm{vect}}$, it suffices to show that $\shcplx V$ is trivial, too. Furthermore, $\Ch(\VF(X))^\perp \subseteq \Ch(\Vect(X))^\perp$ implies that in fact, $\shcplx V \in \Ch(\Vect(X))^\perp$. So as above, construct a short exact sequence
\[ 0 \to \shcplx Q \to \shcplx P \to \shcplx V \to 0, \]
this time with $\shcplx P \in \Ch(\Vect(X))$ and $\shcplx Q \in \Ch(\Vect(X))^\perp$. The same local argument as above shows that $\shcplx Q \in \Ch(\Vect(X))$, and we also have $\shcplx P \in \Ch(\Vect(X))^\perp$ (being an extension of two objects from the class). Hence $\shcplx V$ is a factor of two complexes from the class $\Ch(\Vect(X)) \cap \Ch(\Vect(X))^\perp$, which is a subclass of $\tilclass{\Vect}(X)$ and consequently $\tilclass{\VF}(X)$, therefore consisting of trivial objects of $\M_{\VF}$.
\end{proof}

Finally, the last consequence is also an application of Theorem \ref{t.mc.general case} for the class of very flat quasi-coherent sheaves. It follows from Gillespie \cite[Theorem 4.10]{G}
\begin{cor}
There is a recollement

$$\xymatrix{
\mathbf D_{\mathrm{ac}}(\VF(X)) \ar[rr]|j
&& \mathbf D(\VF(X)) \ar@/_1pc/[ll]\ar@/^1pc/[ll]\ar[rr]|w &&\mathbf D(X) \ar@/_1pc/[ll]\ar@/^1pc/[ll] }$$

\end{cor}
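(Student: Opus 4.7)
The plan is to deduce the recollement by applying Gillespie's general construction [G, Theorem~4.10] to the cotorsion-pair data underlying the very flat model $\M_{\VF(X)}$ produced by Theorem~\ref{t.mc.general case}. Recall that $\M_{\VF(X)}$ arises from the two complete hereditary cotorsion pairs $(\Ch(\VF(X)),\Ch(\VF(X))^{\perp})$ and $(\widetilde{\VF}(X),dg(\CA(X)))$ in $\Ch(\Qcoh(X))$, whose left classes are nested as $\widetilde{\VF}(X)\subseteq \Ch(\VF(X))$. This pattern of two nested complete hereditary cotorsion pairs sharing a compatible right-hand structure is precisely the input that Gillespie's theorem converts into a recollement among three derived categories.

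Concretely, I would identify the three terms and the connecting functors at the model-categorical level. The middle term $\mathbf{D}(\VF(X))$ is the homotopy category of $\M_{\VF(X)}$. The right-hand term $\mathbf{D}(X)$ is the ordinary derived category of $\Qcoh(X)$, obtained as the Bousfield-style localization of $\M_{\VF(X)}$ in which the class of trivial objects is enlarged to all acyclic complexes; the functor $w$ is the evident localization at quasi-isomorphisms. The left-hand term $\mathbf{D}_{\mathrm{ac}}(\VF(X))$ is realized as the homotopy category of the restricted exact model on $\Ch_{\mathrm{ac}}(\VF(X))$ with trivial objects $\widetilde{\VF}(X)$, and the inclusion $j$ is induced by the embedding $\Ch_{\mathrm{ac}}(\VF(X))\hookrightarrow \Ch(\VF(X))$.

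The adjoints required by a recollement are then supplied formally by the completeness of the underlying cotorsion pairs: cofibrant and fibrant replacements in the two model structures on $\Ch(\Qcoh(X))$ produce the two adjoints to $w$, while the corresponding replacements in the restricted model on $\Ch_{\mathrm{ac}}(\VF(X))$ yield the adjoints to $j$. Full faithfulness of the relevant functors reduces to the orthogonality relations built into the cotorsion pairs, namely $\widetilde{\VF}(X)\perp dg(\CA(X))$ and $\Ch(\VF(X))\perp \Ch(\VF(X))^{\perp}$.

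I expect the main obstacle to lie in identifying the essential image of $j$ with the kernel of $w$, i.e.\ in showing that a complex of very flats becomes zero in $\mathbf{D}(X)$ exactly when its class in $\mathbf{D}(\VF(X))$ comes from an acyclic complex of very flats. This identification is built into Gillespie's general statement, so the real task is to confirm that its hypotheses are satisfied for the $\VF$-model; these hypotheses follow from the results assembled in Section~\ref{section.veryflat}, in particular the completeness and heredity of the very flat cotorsion pair in $\Qcoh(X)$, Proposition~\ref{prop.vf.generators}, and the stability of $\VF(X)$ under the direct-image functors $j_{*}$ along affine open immersions. Once these are in place, the six functors and their defining identities fall out of the abstract machinery.
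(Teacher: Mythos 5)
Your proposal is correct and follows essentially the same route as the paper, which proves this corollary simply by invoking Gillespie's \cite[Theorem 4.10]{G} for the very flat cotorsion-pair data $(\Ch(\VF(X)),\Ch(\VF(X))^{\perp})$ and $(\widetilde{\VF}(X),dg(\CA(X)))$ supplied by Theorem \ref{t.mc.general case} and Section \ref{section.veryflat}. Your elaboration of the three terms, the six functors, and the hypotheses to be checked is a faithful unpacking of what that citation delegates to the abstract machinery.
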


\medskip\par\noindent

\begin{remark}
Murfet and Salarian deal in \cite{MS} with a suitable generalization of total acyclicity for sche\-mes. Namely, they define the category ${\bf D}_{{\rm F}\textrm{-}{\rm tac}}(\Flat(X))$\footnote{\,The terminology used in \cite{MS} is ${\bf D}_{{\rm tac}}(\Flat(X))$.}  of \emph{F-totally acyclic complexes} in ${\bf D}(\Flat(X))$ and prove that, in case $X=\Spec(R)$ is affine and $R$ is Noetherian of finite Krull dimension, ${\bf D}_{{{\rm F}\textrm{-}{\rm tac}}}(\Flat(X))$ is triangle equivalent to ${\bf K}_{\rm tac}({\Proj}(R))$ (the homotopy category of totally acyclic complexes of projective modules) showing that ${\bf D}_{{\rm F}\textrm{-}{\rm tac}}(\Flat(X))$ also constitutes a good replacement of ${\bf K}_{\rm tac}({\Proj}(R))$ in a non-affine context. An analogous version of Theorem \ref{t.mc.general case} allows to restrict the equivalence between ${\bf D}(\mathrm{Flat}(X))$ and $\mathbf D\mathbb(\A_{{\rm qc}})$ to their corresponding categories of F-totally acyclic complexes  ${\bf D}_{{\rm F}\textrm{-}{\rm tac}}(\A_{\rm qc})$ and ${\bf D}_{{\rm F}\textrm{-}{\rm tac}}(\Flat(X))$. In particular, the full subcategory ${\bf D}_{{\rm F}\textrm{-}{\rm tac}}(\VF(X))$ of F-totally acyclic complexes of very flat quasi-coherent sheaves in ${\bf D}(\VF(X))$ is triangle equivalent with Murfet's and Salarian's derived category of F-totally acyclic complexes of flats.

\end{remark}

\section*{acknowledgements}
We would like to thank Jan \v S\v t'ov\' \i\v cek for many useful comment and discussions during the preparation of this manuscript. We would also like to thank Leonid Positselski for his inspiring work \cite{P} and for sharing his knowledge on the subject of very flat modules.

\end{document}